\def\BibTeX{{\rm B\kern-.05em{\sc i\kern-.025em b}\kern-.08em
		T\kern-.1667em\lower.7ex\hbox{E}\kern-.125emX}}
\newtheorem{theorem}{Theorem}[section]
\newtheorem{remark}[theorem]{Remark}
\newenvironment{proof}[1][Proof]{\noindent\textbf{#1.} }{\ \rule{0.5em}{0.5em}}
\newtheorem{assumption}{Assumption}
\newcommand{\ym}[1]{\textcolor{black}{#1}}
\begin{document}
	
	\title{{\color{black}An ADMM-Based Approach to Quadratically-Regularized Distributed Optimal Transport on Graphs}}
	\author{Yacine Mokhtari, Emmanuel Moulay, Patrick Coirault, and Jerome Le Ny 
		\thanks{%
			Yacine Mokhtari and Patrick Coirault are with LIAS (UR 20299), ISAE-ENSMA / Universit\'{e} de Poitiers, Poitiers, France. (e-mails: yacine.mokhtari@ensma.fr, patrick.coirault@univ-poitiers.fr)%
		} \thanks{%
			Emmanuel Moulay is with XLIM (UMR CNRS 7252), Universit\'{e} de
			Poitiers, Poitiers, France. (e-mail: emmanuel.moulay@univ-poitiers.fr)}
		\thanks{%
			Jerome Le Ny is with Polytechnique Montr\'{e}al, Montr\'{e}al,
			Canada. (e-mail: jerome.le-ny@polymtl.ca)}}
	
	\maketitle
	
	\begin{abstract}
		Optimal transport on a graph focuses on finding the most efficient way to
		transfer resources from one distribution to another while considering the
		graph's structure. This paper introduces a new distributed algorithm that
		solves the optimal transport problem on directed, strongly connected graphs,
		unlike previous approaches which were limited to bipartite graphs. Our
		algorithm incorporates quadratic regularization and guarantees convergence
		using the Alternating Direction Method of Multipliers (ADMM). Notably, it
		proves convergence not only with quadratic regularization but also in cases
		without it, whereas earlier works required strictly convex objective
		functions. In this approach, nodes are treated as agents that collaborate
		through local interactions to optimize the total transportation cost,
		relying only on information from their neighbors. Through numerical
		experiments, we show how quadratic regularization affects both convergence
		behavior and solution sparsity under different graph structures.
		Additionally, we provide a practical example that highlights the algorithm
		robustness through its ability to adjust to topological changes in the graph.
	\end{abstract}
	
	\begin{IEEEkeywords}
		Optimal transport, graphs, distributed algorithms, consensus
	\end{IEEEkeywords}
	
	\section{Introduction}
	
	
	
	\subsection{Optimal transport}
	
	The problem of Optimal Transport (OT) aims to find the most efficient way to
	move ``mass'' from one probability distribution to another. On graphs, this
	problem is also referred to as the \emph{minimum cost flow} problem \cite%
	{bertsekas1998network,ahuja1995network}. OT can be traced back to the work
	of Monge \cite{monge1781memoire} and Kantorovich \cite%
	{kantorovitch1958translocation}. It has gained popularity for its
	applications in fields such as logistics \cite{ahuja1995network}, but also
	learning theory \cite{frogner2015learning}, computer graphics \cite%
	{solomon2015convolutional}, and economics \cite{galichon2018optimal}.
	
	OT can be formulated as a linear programming problem, which however presents
	several implementation challenges. One major difficulty is its high
	computational cost for large-scale problems, making it challenging to apply
	in real-world scenarios that require quick and efficient solutions. The
	problem may also pose other computational challenges, such as non-uniqueness
	of solutions or sensitivity to variations in the input data or
	transportation costs. However, adding a regularization term to the initial
	OT problem can help alleviate these issues \cite{cuturi2013sinkhorn}.
	
	Entropic regularization for OT has recently gained significant attention,
	see \cite{Peyre-cuturi,cuturi2013sinkhorn,benamou2015iterative} for example.
	In particular, OT with entropic regularization can be solved more
	efficiently using the Sinkhorn algorithm, which is well-suited for
	parallelization. This allows computations to be divided into independent
	tasks. These tasks can then be executed simultaneously on multiple
	processors in shared memory environments. Because reducing the
	regularization parameter can cause slower convergence and numerical issues,
	recent work has introduced improvements in the small regularization regime 
	\cite{schmitzer2019stabilized}. However, one issue with entropically
	regularized OT plans is that they are not sparse; it means they induce
	positive flows between all pairs of vertices. For more details, the authors
	of \cite{Peyre-cuturi} provide a comprehensive review of the computational
	aspects of OT.
	
	There has been comparatively less research on other types of regularizations
	for OT, such as quadratic or Group-Lasso regularization which promote
	sparsity in the solution, or Laplacian regularization which helps preserve
	the spatial or topological structure by maintaining locality and
	neighborhood relationships \cite%
	{flamary2014optimal,essid2018quadratically,dessein2018regularized,blondel2018smooth}
	. In \cite{essid2018quadratically}, it is shown that adding a quadratic
	regularization term, also known as Tikhonov regularization may yield sparse
	transport plans. Such plans have several advantages such as requiring less
	memory for storage and being easier to interpret.
	
	
	\subsection{Distributed optimization}
	
	
	Distributed optimization algorithms, where agents collaborate to optimize a
	global objective, are favored due to their scalability, robustness, privacy,
	and adaptability. By distributing tasks among multiple agents, they excel in
	large-scale systems, ensuring continuous operation despite local failures or
	disruptions. Particularly valuable for privacy-sensitive applications, they
	allow for local data processing without centralized control. Additionally,
	they can reduce communication bottlenecks compared to centralized systems 
	\cite{molzahn2017survey,yang2019survey}.
	
	Distributed optimization is applied in a variety of contexts, including
	multi-agent system rendezvous, optimization techniques like Support Vector
	Machines (SVMs) in machine learning, electric power system management as
	smart grid monitoring, among others \cite%
	{lin2003multi,cortes1995support,molzahn2017survey,nabavi2015distributed}.
	These applications lead to problems where $N$ agents collaborate to solve an
	optimization problem of the form 
	\begin{equation}
		\underset{x\in \mathbb{R}^{N}}{\min }\sum_{i=1}^{N}f_{i}(x),
		\label{consensus}
	\end{equation}
	where $f_{i}:\mathbb{R}^{N}\rightarrow \mathbb{R}$ is the local objective
	function of agent $i$ and the optimization variable $x$ is common to all
	agents. Many distributed algorithms have been proposed to solve %
	\eqref{consensus}.
	
	In the context of OT, the studies \cite{zhang2019consensus, hughes2021fair,
		wang2023decentralized} present distributed algorithms for OT, but
	exclusively on bipartite graphs. Earlier works, such as \cite%
	{bertsekas1987distributed} and \cite{beraldi1997efficient} propose
	distributed algorithms under certain assumptions, one of which is that
	agents continuously perform update iterations without stopping. This
	assumption is not met when an agent leaves a network having topological
	changes in its graph. Furthermore, as mentioned in \cite%
	{bertsekas1987distributed}, these algorithms are more suitable for execution
	on shared memory systems rather than for distributed implementation. The
	authors of \cite{zargham2013accelerated} introduced a second-order dual
	descent distributed algorithm to solve the network flow optimization
	problem. However, this method requires the graph to be non-bipartite and the
	local objective functions to be strongly convex. Later, the authors of \cite%
	{maffei2015colored} relaxed the assumption of a non-bipartite graph by
	incorporating sequential cluster updates using the Colored Gauss-Seidel
	method. However, the strong convexity assumption restricts its applicability
	to the unregularized OT problem, while the sequential updates prevent the
	method from functioning as a fully distributed solution.
	
	The distributed algorithm proposed in this paper is based on the Alternating
	Direction Method of Multipliers (ADMM), a primal-dual splitting method \cite%
	{Boyd1}. It operates by iteratively updating primal and dual variables,
	dividing the optimization problem into smaller and more manageable
	subproblems. ADMM is well-suited for designing distributed optimization
	algorithms that converge under mild assumptions \cite{Boyd1}. It is
	effective for handling large-scale problems within distributed systems. For
	further details, we refer the reader to \cite%
	{yang2019survey,molzahn2017survey,forero2010consensus, huang2016consensus}
	and the references therein.
	
	
	\subsection{Our contribution}
	
	
	We present a new distributed algorithm with quadratic regularization for the
	OT problem on general graphs and objective functions, unlike previous work
	where only bipartite graphs or strictly convex objective functions have been
	considered \cite%
	{zhang2019consensus,hughes2021fair,wang2023decentralized,zargham2013accelerated,maffei2015colored}
	. Although the proposed scheme requires each agent to solve a sequence of
	quadratic optimization problems, it has the advantage of not requiring a
	central entity to collect information. In other words, each agent only
	communicates with its neighbors. Moreover, it can adapt to changes during
	its execution and it does not require restarting when participants leave or
	join the network, as demonstrated through simulations in Section~\ref%
	{numerical}. Inspired by \cite{mateos2010distributed,
		banjac2019decentralized}, the convergence of the proposed algorithm is
	demonstrated by using the convergence theory of ADMM algorithms \cite{Boyd1,
		wei2012distributed}. Moreover, we provide an example demonstrating how
	quadratic regularization and the ADMM algorithm promote sparsity and
	robustness through its ability to adjust to topological changes in the
	graph. This is in line with the findings of \cite{essid2018quadratically} in
	the centralized setting, although the objective in our case differs by
	applying regularization to each individual cost function.
	
	The structure of the paper is as follows. In Section~\ref{Section: OT graphs}
	, we revisit some formulations of the OT problem on graphs and put our
	problem into the form \eqref{consensus}. Section~\ref{section: main results}
	outlines the proposed algorithm and establishes its convergence. Section~\ref%
	{numerical} presents numerical simulations that highlight both the
	effectiveness and robustness of the proposed algorithm and examine the
	impact of the quadratic regularization in different situations.
	
	
	\section{Preliminaries}
	
	\label{Section: OT graphs} 
	
	Consider a strongly connected directed graph $G = (V, \mathcal{A})$ where $V$
	is the set of nodes and $\mathcal{A} \subset V \times V$ is the set of arcs. 
	$|V|$ and $|\mathcal{A}|$ denote the cardinality of $V$ and $\mathcal{A}$
	respectively. An arc from node $i$ to node $j$ corresponds to an ordered
	pair $(i,j)$. Let us denote $L$ the Laplacian matrix associated with $G$, $%
	I_n$ the identity matrix of dimension $n$, and $\otimes$ the tensor product.
	For each $i \in V$, $N^-(i)=\{j \in V : (j,i) \in \mathcal{A }\}$ and $%
	N^+(i) = \{ j \in V : (i,j) \in \mathcal{A }\}$ denote the sets of incoming
	and outgoing neighbors of $i$.
	
	Let $\boldsymbol{c}\in \mathbb{R}_{+}^{|\mathcal{A}|}$ be the cost vector so
	that $c_{ij}$ denotes the cost of transportation along the arc $(i,j)$.
	Consider two probability distributions $\boldsymbol{\rho ^{0},\rho ^{\infty
	} }\in \mathbb{R}_{+}^{|V|}$ such that $\sum_{i\in V}\rho
	_{i}^{0}=\sum_{i\in V}\rho _{i}^{\infty }=1$, and denote $\boldsymbol{\rho }=%
	\boldsymbol{\rho }^{0}-\boldsymbol{\rho }^{\infty }$. The $1$-Wasserstein
	distance $\mathcal{W}_{1}\left( \boldsymbol{\rho ^{0}},\boldsymbol{\rho
		^{\infty }} \right) $ between the two distributions reads 
	\begin{equation}
		\min_{\substack{ \boldsymbol{0}\leq \boldsymbol{\pi }\leq \boldsymbol{\pi }
				^{c}  \\ \boldsymbol{\pi }\in \mathbb{R}_{+}^{|\mathcal{A}|}}}\left\{
		\sum_{(i,j)\in \mathcal{A}}\pi _{ij}c_{ij}:\operatorname{div}(\boldsymbol{\pi }
		)_{i}=\rho _{i}^{0}-\rho _{i}^{\infty },\quad \forall i\in V\right\},
		\label{min-cost-flow}
	\end{equation}
	where $\operatorname{div}:\mathbb{R}_{+}^{|\mathcal{A}|}\rightarrow \mathbb{R}^{|V|}$
	is the negative divergence operator, i.e., 
	\begin{equation*}
		\operatorname{div}(\boldsymbol{\pi })_{i}:=\sum_{j\in N^{+}(i)}\pi _{ij}-\sum_{j\in
			N^{-}(i)}\pi _{ji},\quad \forall i\in V,
	\end{equation*}
	where $\operatorname{div}(\boldsymbol{\pi })_{i}$ denotes the $i$-th component of
	the divergence vector $\operatorname{div}(\boldsymbol{\pi })$ and $\boldsymbol{\pi }
	^{c}\in \mathbb{R}_{+}^{|\mathcal{A}|}$ are the arc capacity constraints.
	The inequality $\boldsymbol{0}\leq \boldsymbol{\pi }\leq \boldsymbol{\pi }
	^{c}$ is understood componentwise, and if an arc $(i,j)$ has no capacity
	constraint, we set $\boldsymbol{\pi }_{ij}^{c}=+\infty $.
	
	The minimization problem~\eqref{min-cost-flow} represents an OT problem on
	the graph $G$, or the minimum-cost flow problem. Numerous algorithms have
	been developed to solve this problem, which are surveyed in \cite%
	{sifaleras2016minimum,kovacs2015minimum} for instance. When $G$ is a
	complete bipartite graph, problem~\eqref{min-cost-flow} reduces to a
	classical discrete OT problem \cite[Chapter~3.4.1]{Peyre-cuturi}. Our goal
	is to design a distributed algorithm solving a regularized version of
	problem~\eqref{min-cost-flow}. We make the following assumption.
	
	\begin{assumption}
		\label{asmpt: feasible solution} Problem \eqref{min-cost-flow} has a
		feasible solution.
	\end{assumption}
	
	\begin{remark}
		Under Assumption \ref{asmpt: feasible solution}, there must exist an optimal
		solution to \eqref{min-cost-flow}, because this is linear program minimizing
		a nonnegative cost \cite[p. 150]{Bertsimas1997introLP}. In addition, strong
		duality must hold for this linear program \cite[Chapter 4]%
		{Bertsimas1997introLP}, i.e., the dual linear program also has an optimal
		solution and the primal and dual costs are equal. \ym{It is also important to emphasize that the solution is not unique, as the problem is convex but not strictly convex. Convexity alone does not guarantee uniqueness of the solution.}
		
	\end{remark}
	
	
	\subsection{Distribution of computation tasks and regularization}
	
	
	Let us rewrite the minimization problem~\eqref{min-cost-flow} in the form %
	\eqref{consensus}. For this, we make the incoming and outgoing flows at node 
	$i$ appear in its local objective function $f_{i}$. We have 
	\begin{equation*}
		\sum_{(i,j)\in \mathcal{A}}c_{ij}\pi _{ij}=\sum_{i\in V}\sum_{j\in
			N^{+}(i)}c_{ij}\pi _{ij}=\sum_{i\in V}f_{i}\left( \boldsymbol{\pi }\right) ,
	\end{equation*}
	where $f_{i}(\boldsymbol{\pi })=\sum_{j\in N^{+}(i)}c_{ij}\pi _{ij}$. Next,
	we introduce local variables $\left( \boldsymbol{\pi }^{i}\right) _{i\in V}$
	, representing $|V|$ copies of $\boldsymbol{\pi }$, with each $\boldsymbol{\
		\pi }^{i}\in \mathbb{R}_{+}^{|\mathcal{A}|}$, together with the constraints $%
	\boldsymbol{\pi }^{i}=\boldsymbol{\pi }^{j}$ for all $\left( i,j\right) \in 
	\mathcal{A}$. We also write the divergence constraint for every agent $i$ in
	terms of its respective local variable $\boldsymbol{\pi }^{i}$, as $\operatorname{%
		div	}\left( \boldsymbol{\pi }^{i}\right) _{i}=\rho _{i}$, which is
	equivalent to the original constraint $\operatorname{div}\left( \boldsymbol{\pi }%
	\right) = \boldsymbol{\rho }$. Since $G$ is strongly connected, $\mathcal{W}%
	_{1}\left( \boldsymbol{\rho ^{0}},\boldsymbol{\rho ^{\infty }}\right) $ is
	defined as 
	\begin{equation}
		\min_{\substack{ \boldsymbol{0}\leq \boldsymbol{\pi }^{i}\leq \boldsymbol{\
					\pi }^{c},  \\ \boldsymbol{\pi }^{i}\in \mathbb{R}_{+}^{|\mathcal{A}
					|},\;\forall i\in V}}\left\{ \sum_{i\in V}f_{i}(\boldsymbol{\pi }
		^{i}):\left. 
		\begin{array}{c}
			\operatorname{div}\left( \boldsymbol{\pi }^{i}\right) _{i}=\rho _{i},\;\forall i\in
			V, \\ 
			\boldsymbol{\pi }^{i}=\boldsymbol{\pi }^{j},\;\forall (i,j)\in \mathcal{A}%
		\end{array}
		\right. \right\} .  \notag  \label{decentralized}
	\end{equation}
	Now, we consider the following quadratically regularized version of problem~%
	\eqref{decentralized} 
	\begin{eqnarray}
		&&\mathcal{W}_{1}^{\gamma }\left( \boldsymbol{\rho ^{0}},\boldsymbol{\rho
			^{\infty }}\right)  \label{R.D.OT} \\
		&=&\min_{\substack{ \boldsymbol{0}\leq \boldsymbol{\pi }^{i}\leq \boldsymbol{%
					\ \pi }^{c},  \\ \boldsymbol{\pi }^{i}\in \mathbb{R}_{+}^{|\mathcal{A}
					|},\;\forall i\in V}}\left\{ 
		\begin{array}{c}
			\sum_{i\in V}\left( f_{i}(\boldsymbol{\pi }^{i})+\frac{\gamma }{2|V|}
			\left\Vert \boldsymbol{\pi }^{i}\right\Vert ^{2}\right) : \\ 
			\; 
			\begin{array}{@{}l}
				\operatorname{div}\left( \boldsymbol{\pi }^{i}\right) _{i}=\rho _{i},\;\forall i\in
				V, \\ 
				\boldsymbol{\pi }^{i}=\boldsymbol{\pi }^{j},\;\forall (i,j)\in \mathcal{A}%
			\end{array}%
		\end{array}
		\right\} ,  \notag
	\end{eqnarray}
	where $\left\Vert \cdot \right\Vert $ is the Euclidean norm and $\gamma >0$
	is a regularization parameter. A local quadratic regularization term is
	included for every agent in the set $V$, making the problem strictly convex
	and guaranteeing a unique minimizer for any given positive $\gamma $. The
	regularization parameter $\gamma $ is divided by $|V|$ to prevent the
	quadratic regularization from overpowering the local objective function when 
	$|V|$ is large. For simplicity, we will henceforth denote the regularization
	parameter as $\gamma $ instead of $\frac{\gamma }{|V|}$.
	
	\begin{remark}
		\label{rem: strong duality W_1^gamma} Under Assumption \ref{asmpt: feasible
			solution}, the convex quadratic program \eqref{R.D.OT} also has a feasible
		solution. Since its optimal value must be finite because the cost is
		nonnegative, again the primal and dual problems both have an optimal
		solution and there is no duality gap \cite[Proposition 6.2.2]{Bertsekas99NLP}
		.
	\end{remark}
	
	\begin{remark}
		Assuming the problem is feasible. By a $\Gamma $-convergence argument, or
		more simply by compactness if $\boldsymbol{\pi }^{c}$ is finite, the
		solution $\boldsymbol{\pi }_{\ast }^{\gamma }$ of \eqref{R.D.OT} converges
		when $\gamma \rightarrow 0$ to the solution $\boldsymbol{\pi }_{\ast }^{0}$
		of \eqref{decentralized} that maximizes the $L^{1}$-norm, see for instance 
		\cite[Proposition 4.1]{Peyre-cuturi} and \cite[Proposition 4]%
		{essid2018quadratically}. In particular, we have 
		\begin{equation}
			\mathcal{W}_{1}^{\gamma }\left( \boldsymbol{\rho }^{0},\boldsymbol{\rho }
			^{\infty }\right) \underset{\gamma \rightarrow 0}{\longrightarrow }\mathcal{%
				W }_{1}\left( \boldsymbol{\rho }^{0},\boldsymbol{\rho }^{\infty }\right) .
		\end{equation}
		Instead of the approach above, we could distribute the tasks to the
		quadratically-regularized version of problem~\eqref{min-cost-flow}.
		Following \eqref{decentralized}, we could write 
		\begin{eqnarray*}
			&&\sum_{\left( i,j\right) \in \mathcal{A}}\left( c_{ij}\pi _{ij}+\frac{
				\gamma }{2}\pi _{ij}^{2}\right) \\
			&=&\sum_{\left( i,j\right) \in \mathcal{A}}c_{ij}\pi _{ij}+\frac{\gamma }{2}
			\sum_{\left( i,j\right) \in \mathcal{A}}\pi _{ij}^{2} \\
			&=&\sum_{i\in V}\left( f_{i}(\boldsymbol{\pi })+\frac{\gamma }{2}\sum_{j\in
				N^{+}(i)}\pi _{ij}^{2}\right) .
		\end{eqnarray*}
		The final term in the equation above has less influence compared to the
		regularization term $\frac{\gamma }{2}\left\Vert \boldsymbol{\pi }
		^{i}\right\Vert ^{2}$ in \eqref{R.D.OT}. This is because the latter ensures
		strong convexity of the local functions for $\gamma >0$, unlike the local
		objective functions in \eqref{decentralized}, which are only convex for any
		value of $\gamma $. This clarifies why we opted for the regularization of %
		\eqref{decentralized} instead.
	\end{remark}
	
	
	\subsection{Review on ADMM}
	
	
	ADMM combines the separability of dual decomposition with the convergence
	characteristics of the method of multipliers \cite{Boyd1}. It effectively
	addresses optimization problems structured as follows: 
	\begin{equation}  \label{ADMM}
		\begin{aligned} \underset{\boldsymbol{x}_{1}\in \Omega
				_{1},\;\boldsymbol{x}_{2}\in \Omega _{2}}{\min} & \quad
			g_{1}(\boldsymbol{x}_{1})+g_{2}(\boldsymbol{x}_{2}), \\ \text{subject to} &
			\quad
			\boldsymbol{B}_{1}\boldsymbol{x}_{1}+\boldsymbol{B}_{2}\boldsymbol{x}_{2}=
			\boldsymbol{b}, \end{aligned}
	\end{equation}
	where $\Omega _{i}\subset \mathbb{R}^{n_{i}}$ are convex sets for $i=1,2$, $%
	\boldsymbol{B}_{i}\in \mathbb{R}^{m\times n_{i}}$, $\boldsymbol{b}\in 
	\mathbb{R}^{m}$, $g_{i}:\Omega _{i}\rightarrow \mathbb{R}$ assumed convex.
	The augmented Lagrangian for this problem is then 
	\begin{equation*}
		\begin{aligned} \mathcal{L}_{\delta }(\boldsymbol{x}_{1},
			\boldsymbol{x}_{2}, \boldsymbol{\lambda }) &= g_{1}(\boldsymbol{x}_{1}) +
			g_{2}(\boldsymbol{x}_{2}) + \boldsymbol{\lambda }^{T} \left(
			\boldsymbol{B}_{1} \boldsymbol{x}_{1} + \boldsymbol{B}_{2}
			\boldsymbol{x}_{2} - \boldsymbol{b} \right) \\ &\quad + \frac{\delta }{2}
			\left\Vert \boldsymbol{B}_{1} \boldsymbol{x}_{1} + \boldsymbol{B}_{2}
			\boldsymbol{x}_{2} - \boldsymbol{b} \right\Vert ^{2}. \end{aligned}
	\end{equation*}
	
	where $\delta >0$ is a penalization parameter. The ADMM algorithm consists
	of the iterations 
	\begin{subequations}
		\begin{align}
			\boldsymbol{x}_{1}^{k+1} &\in \underset{\boldsymbol{x}_{1}\in \Omega _{1}}{
				\arg \min }\, \mathcal{L}_{\delta }(\boldsymbol{x}_{1},\boldsymbol{x}
			_{2}^{k},\boldsymbol{\lambda }^{k}),  \label{dynamic1} \\
			\boldsymbol{x}_{2}^{k+1} &\in \underset{\boldsymbol{x}_{2}\in \Omega _{2}}{
				\arg \min }\, \mathcal{L}_{\delta }(\boldsymbol{x}_{1}^{k+1},\boldsymbol{x}
			_{2},\boldsymbol{\lambda }^{k}),  \label{dynamic2} \\
			\boldsymbol{\lambda }^{k+1} &= \boldsymbol{\lambda }^{k}+\delta \left( 
			\boldsymbol{B}_{1}\boldsymbol{x}_{1}^{k+1}+\boldsymbol{B}_{2}\boldsymbol{x}
			_{2}^{k+1}-\boldsymbol{b}\right),  \label{dynamic3}
		\end{align}
	\end{subequations}
	
	for $k\in\mathbb{N}$.
	
	We have the following convergence result proved in \cite[Appendix A]{Boyd1}.
	
	\begin{theorem}
		\label{TheoremADMM} Assume that:
		
		\begin{description}
			\item[\textrm{A}$_{1})$] the functions $g_{i}$, $i=1,2$, are convex, closed,
			and proper;
			
			\item[\textrm{A}$_{2})$] the unaugmented Lagrangian $\mathcal{L}_{0}$ has a
			saddle point, i.e., there exist $\boldsymbol{x}_{1}^{\ast }$, $\boldsymbol{x}
			_{2}^{\ast }$ and $\boldsymbol{\lambda }^{\ast }$ such that for all $%
			\boldsymbol{x}_{1},\boldsymbol{x}_{2},\boldsymbol{\lambda }$: 
			\begin{equation*}
				\mathcal{L}_{0}(\boldsymbol{x}_{1}^{\ast },\boldsymbol{x}_{2}^{\ast }, 
				\boldsymbol{\lambda })\leq \mathcal{L}_{0}(\boldsymbol{x}_{1}^{\ast }, 
				\boldsymbol{x}_{2}^{\ast },\boldsymbol{\lambda }^{\ast })\leq \mathcal{L}
				_{0}(\boldsymbol{x}_{1},\boldsymbol{x}_{2},\boldsymbol{\lambda }^{\ast }).
			\end{equation*}
		\end{description}
		
		Then, as $k \to +\infty$, we have $\boldsymbol{B}_{1}\boldsymbol{x}_{1}^{k}+ 
		\boldsymbol{B}_{2}\boldsymbol{x}_{2}^{k}-\boldsymbol{b} \to 0$, $\boldsymbol{%
			\ B}_{1}^{T}\boldsymbol{B}_{2}\left(\boldsymbol{x}_{2}^{k+1}-\boldsymbol{x}
		_{2}^{k}\right) \to 0$, and $g_{1}(\boldsymbol{x}_{1}^{k}) + g_{2}( 
		\boldsymbol{x}_{2}^{k}) \to g^{\ast}$, $\boldsymbol{\lambda}^k \to 
		\boldsymbol{\lambda}^{\ast}$, where $g^{\ast}$ is the optimal value of the
		minimization problem~\eqref{ADMM}, and $\boldsymbol{\lambda}^{\ast}$ is the
		optimal dual point.
	\end{theorem}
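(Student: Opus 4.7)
The plan is to follow the classical Lyapunov argument for ADMM convergence. Fix a saddle point $(\boldsymbol{x}_{1}^{\ast},\boldsymbol{x}_{2}^{\ast},\boldsymbol{\lambda}^{\ast})$ provided by A$_{2}$, and denote the primal residual by $\boldsymbol{r}^{k+1}:=\boldsymbol{B}_{1}\boldsymbol{x}_{1}^{k+1}+\boldsymbol{B}_{2}\boldsymbol{x}_{2}^{k+1}-\boldsymbol{b}$. The candidate Lyapunov function is
$$V^{k}:=\tfrac{1}{\delta}\|\boldsymbol{\lambda}^{k}-\boldsymbol{\lambda}^{\ast}\|^{2}+\delta\|\boldsymbol{B}_{2}(\boldsymbol{x}_{2}^{k}-\boldsymbol{x}_{2}^{\ast})\|^{2}.$$
The core of the proof is the monotone decrease
$$V^{k+1}\leq V^{k}-\delta\|\boldsymbol{r}^{k+1}\|^{2}-\delta\|\boldsymbol{B}_{2}(\boldsymbol{x}_{2}^{k+1}-\boldsymbol{x}_{2}^{k})\|^{2}, \qquad (\star)$$
which I would establish by combining three ingredients.

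First, the saddle point condition A$_{2}$, instantiated at $(\boldsymbol{x}_{1}^{k+1},\boldsymbol{x}_{2}^{k+1},\boldsymbol{\lambda}^{\ast})$, gives $g_{1}(\boldsymbol{x}_{1}^{k+1})+g_{2}(\boldsymbol{x}_{2}^{k+1})-g^{\ast}\geq -(\boldsymbol{\lambda}^{\ast})^{T}\boldsymbol{r}^{k+1}$. Second, the first-order optimality conditions of subproblems \eqref{dynamic1}--\eqref{dynamic2}, combined with convexity of $g_{i}$ and the dual update \eqref{dynamic3}, produce two variational inequalities: one asserting that $\boldsymbol{x}_{1}^{k+1}$ is optimal against the multiplier $\boldsymbol{\lambda}^{k+1}$ (after absorbing the penalty term via the dual update), and another for $\boldsymbol{x}_{2}^{k+1}$ that features a cross term $\delta\boldsymbol{B}_{2}^{T}\boldsymbol{B}_{1}(\boldsymbol{x}_{1}^{k+1}-\boldsymbol{x}_{1}^{\ast})$; here it is crucial that the $\boldsymbol{x}_{2}$ update sees the already-updated $\boldsymbol{x}_{1}^{k+1}$. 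Adding the three inequalities and completing the square via
$$\|\boldsymbol{\lambda}^{k+1}-\boldsymbol{\lambda}^{\ast}\|^{2}-\|\boldsymbol{\lambda}^{k}-\boldsymbol{\lambda}^{\ast}\|^{2}=2(\boldsymbol{\lambda}^{k+1}-\boldsymbol{\lambda}^{\ast})^{T}(\boldsymbol{\lambda}^{k+1}-\boldsymbol{\lambda}^{k})-\|\boldsymbol{\lambda}^{k+1}-\boldsymbol{\lambda}^{k}\|^{2},$$
and the analogous identity for $\boldsymbol{B}_{2}\boldsymbol{x}_{2}$, yields $(\star)$ after several cancellations driven by the key identity $\boldsymbol{\lambda}^{k+1}-\boldsymbol{\lambda}^{k}=\delta\boldsymbol{r}^{k+1}$.

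Once $(\star)$ is in hand, the rest is short. Telescoping gives $\sum_{k\geq 0}\bigl(\|\boldsymbol{r}^{k+1}\|^{2}+\|\boldsymbol{B}_{2}(\boldsymbol{x}_{2}^{k+1}-\boldsymbol{x}_{2}^{k})\|^{2}\bigr)<\infty$, so $\boldsymbol{r}^{k}\to 0$ and $\boldsymbol{B}_{2}(\boldsymbol{x}_{2}^{k+1}-\boldsymbol{x}_{2}^{k})\to 0$; left-multiplying the latter by $\boldsymbol{B}_{1}^{T}$ produces the stated dual residual convergence. Monotonicity of $V^{k}$ provides boundedness of $\boldsymbol{\lambda}^{k}$; combined with $\boldsymbol{r}^{k}\to 0$ and the two-sided saddle point inequality, this yields both $\limsup_{k}(g_{1}(\boldsymbol{x}_{1}^{k})+g_{2}(\boldsymbol{x}_{2}^{k}))\leq g^{\ast}$ and $\liminf_{k}(g_{1}(\boldsymbol{x}_{1}^{k})+g_{2}(\boldsymbol{x}_{2}^{k}))\geq g^{\ast}$. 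Finally, $\boldsymbol{\lambda}^{k}\to\boldsymbol{\lambda}^{\ast}$ follows because any limit point of $\{\boldsymbol{\lambda}^{k}\}$ must itself be a saddle-point multiplier (by asymptotic primal feasibility and the closure of the optimality conditions), and since $(\star)$ is valid with respect to every such multiplier, an Opial-type argument forces uniqueness of the limit.

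The main obstacle is the derivation of $(\star)$: the algebraic bookkeeping that combines the three variational inequalities and produces exactly the negative squared residuals with the correct coefficients $\delta$ and $1/\delta$ is delicate and hinges on the identity $\boldsymbol{\lambda}^{k+1}-\boldsymbol{\lambda}^{k}=\delta\boldsymbol{r}^{k+1}$ to align cross terms. A secondary subtlety is promoting boundedness of $\boldsymbol{\lambda}^{k}$ to convergence to the specific $\boldsymbol{\lambda}^{\ast}$; here one genuinely exploits that $(\star)$ holds with respect to \emph{every} saddle-point multiplier, not merely a fixed one.
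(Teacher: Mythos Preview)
The paper does not supply its own proof of this theorem; it simply quotes the result and refers to \cite[Appendix~A]{Boyd1}. Your proposal is precisely the classical Lyapunov argument carried out in that appendix: the same function $V^{k}=\delta^{-1}\|\boldsymbol{\lambda}^{k}-\boldsymbol{\lambda}^{\ast}\|^{2}+\delta\|\boldsymbol{B}_{2}(\boldsymbol{x}_{2}^{k}-\boldsymbol{x}_{2}^{\ast})\|^{2}$, the same decrease inequality~$(\star)$ obtained by combining the saddle-point lower bound with the optimality conditions of the two subproblems, and the same telescoping to obtain residual and objective convergence. Your sketch is correct and matches the cited source; the only addition worth flagging is that the Opial-type step you invoke for $\boldsymbol{\lambda}^{k}\to\boldsymbol{\lambda}^{\ast}$ yields convergence to \emph{some} dual optimal point rather than a pre-specified one, which is consistent with how the statement should be read.
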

	
	We should note that there is no reason to expect the sequences $\boldsymbol{%
		x }_i^k$, $i = 1, 2$, defined in \eqref{dynamic1} and \eqref{dynamic2}, to
	converge. However, under certain additional assumptions, such as if $%
	\Omega_{1}$ is a compact set, or if $\boldsymbol{B}_1^T\boldsymbol{B}_1$ is \ym{invertible}, then any limit points of these sequences are minimizers \cite[
	Proposition 4.2]{bertsekas2015parallel}.
	
	In \cite{he20121}, the authors demonstrate that ADMM converges to an optimal
	solution at a rate of $O(1/k)$ without making any assumption about the rank
	of the matrices $\boldsymbol{B}_{i}$ with $i=1,2$. A convergence rate of $%
	O(1/k^2)$ can be established when at least one of the functions $g_i$ is
	strongly convex or smooth \cite{goldfarb2013fast,kadkhodaie2015accelerated}.
	For additional details, the reader may refer to the survey \cite%
	{han2022survey}.
	
	
	\section{Main results}
	
	\label{section: main results} 
	
	
	\subsection{The proposed algorithm}
	
	
	We aim to design a distributed algorithm allowing agents to collaboratively
	solve the minimization problem~\eqref{R.D.OT} for some fixed $\gamma >0$, by
	communicating directly only with their neighbors in the graph $G$. For this,
	we propose the following ADMM-based algorithm. We fix $\delta >0$, and set
	the initial states of each agent $i\in V$ as $\boldsymbol{\pi }_{0}^{i}= 
	\boldsymbol{s}_{0}^{i}=\boldsymbol{0}_{|\mathcal{A}|}$, $\alpha _{0}^{i}=0$.
	Then, each agent $i\in V$ implements the following updates for $k\in \mathbb{%
		\ N}$: 
	\begin{subequations}
		\begin{eqnarray}
			\boldsymbol{\pi }_{k+1}^{i} &=&\underset{\boldsymbol{0}\leq \boldsymbol{\pi }
				^{i}\leq \boldsymbol{\pi }^{c}}{\arg \min }\delta ^{-1}f_{i}(\boldsymbol{\pi 
			}^{i})+\frac{\delta ^{-1}\gamma }{2}\left\Vert \boldsymbol{\pi }
			^{i}\right\Vert ^{2}  \label{iterate1} \\
			&&+\alpha _{k}^{i}\operatorname{div}\left( \boldsymbol{\pi }^{i}\right) _{i}+\frac{1 
			}{2}\left\Vert \operatorname{div}\left( \boldsymbol{\pi }^{i}\right) _{i}-\rho
			_{i}\right\Vert ^{2}  \notag \\
			&&+\left( \boldsymbol{s}_{k}^{i}\right) ^{T}\boldsymbol{\pi }^{i}  \notag \\
			&&+\frac{1}{2}\sum_{j\in N^{+}(i)\cup N^{-}(i)}\left\Vert \boldsymbol{\pi }
			^{i}-\frac{\boldsymbol{\pi }_{k}^{i}+\boldsymbol{\pi }_{k}^{j}}{2}
			\right\Vert ^{2},  \notag \\
			\alpha _{k+1}^{i} &=&\alpha _{k}^{i}+\operatorname{div}\left( \boldsymbol{\pi }
			_{k+1}^{i}\right) _{i}-\rho _{i},  \label{iterate2} \\
			\boldsymbol{s}_{k+1}^{i} &=&\boldsymbol{s}_{k}^{i}+\frac{1}{2}\sum_{j\in
				N^{+}(i)\cup N^{-}(i)}\left( \boldsymbol{\pi }_{k+1}^{i}-\boldsymbol{\pi }
			_{k+1}^{j}\right) .  \label{iterate3}
		\end{eqnarray}
		
		The iteration in \eqref{iterate1} is referred to as the primal update, while %
		\eqref{iterate2} and \eqref{iterate3} are known as dual updates. Note that
		iterations \eqref{iterate1}--\eqref{iterate3} are fully distributed, that
		is, each agent $i \in V$ shares its primal variable $\boldsymbol{\pi}^i$
		only with its neighbors. In addition, each agent only needs to retain
		information from the previous iteration. \ym{Since $f_i$ depends linearly on $%
			\boldsymbol{\pi}^i$ for all $i \in V$, \eqref{iterate1} represents a convex
			quadratic programming problem, which can be efficiently solved using
			standard optimization solvers}.
		
		\textbf{Stopping criterion.} Let $\varepsilon >0$ be an error tolerance
		parameter. The algorithm is stopped, and considered to have converged if the
		following condition on the residual error holds: 
	\end{subequations}
	\begin{equation}
		\mathsf{Error}_{k}:=\sum_{i\in V}\left( 
		\begin{array}{c}
			\left\Vert \boldsymbol{s}_{k+1}^{i}-\boldsymbol{s}_{k}^{i}\right\Vert
			+\left\Vert \boldsymbol{\pi }_{k+1}^{i}-\boldsymbol{\pi }_{k}^{i}\right\Vert
			\\ 
			+\left\vert \alpha _{k+1}^{i}-\alpha _{k}^{i}\right\vert%
		\end{array}
		\right) <\varepsilon .  \label{error}
	\end{equation}
	
	\begin{remark}
		For the sake of clarity, we use the centralized stopping criterion %
		\eqref{error} which differs from the distributed nature of the Algorithm~ %
		\eqref{iterate1}--\eqref{iterate3} because a distributed stopping criterion
		will generate results more difficult to interpret because each agent will
		stop at different times. Nonetheless, this can be modified by taking into
		account a distributed stopping criterion as in \cite{asefi2020distributed}.
	\end{remark}
	
	\begin{remark}
		problem~\eqref{min-cost-flow} can also be solved using ADMM in a centralized
		manner, where each agent solves its own problem independently. Both
		centralized and distributed approaches share similar complexity, primarily
		due to solving a quadratic problem. However, in a distributed setting, ADMM
		enables collaboration between agents, ensuring consistency across the
		network. \ym{In contrast, the centralized approach relies on a central coordinator to collect and process all network data, which can lead to significant computational and communication overhead and limit scalability}. Distributed ADMM
		overcomes these issues by splitting computation and by reducing
		communications that only take place between neighbors, allowing dynamic
		changes in network data and making it more efficient and scalable in
		large-scale systems.
	\end{remark}
	
	
	\subsection{Convergence analysis}
	
	
	We have the following convergence result.
	
		\begin{theorem}
			\label{Theorem}
			Let $\gamma \geq 0$, $\delta > 0$, and suppose the capacity vector $\boldsymbol{\pi}^c$ is finite. Under Assumption \ref{asmpt: feasible solution}, consider the iterative scheme \eqref{iterate1}-\eqref{iterate3}. Then the following convergence properties hold:
			\begin{itemize}
				\item[(i)] The sequence $\sum_{i\in V} f_{i}(\boldsymbol{\pi}_k^{i}) + \frac{\gamma}{2} \| \boldsymbol{\pi}_k^{i} \|^{2}$
				converges to $\mathcal{W}_{1}^{\gamma }\left( \boldsymbol{\rho^{0}}, \boldsymbol{\rho^{\infty}}\right)$ defined by \eqref{R.D.OT}.
				
				\item[(ii)] If $\gamma=0$, for each node $i \in V$, any limit point $\boldsymbol{\pi}^{i}_{\ast}$ of the sequence $\left(\boldsymbol{\pi}^{i}_{k}\right)_{k \geq 0}$ is a solution (minimizer) to problem~\eqref{R.D.OT}. Moreover, all limit points coincide, i.e., $\boldsymbol{\pi}^{i}_{\ast} = \boldsymbol{\pi}_{\ast}$ for all $i \in V$.
				
				\item[(iii)] If $\gamma > 0$, the solution set reduces to a single element; in other words, the sequence $(\boldsymbol{\pi}^{i}_k)_{k\geq1}$, for all $i \in V$, has a unique limit.
			\end{itemize}
		\end{theorem}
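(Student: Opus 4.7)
The plan is to recast the distributed scheme \eqref{iterate1}--\eqref{iterate3} as the two-block ADMM iterations \eqref{dynamic1}--\eqref{dynamic3} applied to an equivalent reformulation of \eqref{R.D.OT}, and then to invoke Theorem \ref{TheoremADMM}. Concretely, I would take block $\boldsymbol{x}_{1} = (\boldsymbol{\pi}^{i})_{i \in V}$ over the compact box $\Omega_{1} = \prod_{i \in V}[\boldsymbol{0},\boldsymbol{\pi}^{c}]$, with $g_{1}(\boldsymbol{x}_{1}) = \delta^{-1}\sum_{i}\bigl[f_{i}(\boldsymbol{\pi}^{i}) + \tfrac{\gamma}{2}\|\boldsymbol{\pi}^{i}\|^{2}\bigr]$, and block $\boldsymbol{x}_{2}$ consisting of scalar slacks $y^{i}$ pinned to $\rho_{i}$ together with edge variables $\boldsymbol{z}^{ij}=\boldsymbol{z}^{ji}$, tied to the first block by the linear constraints $\operatorname{div}(\boldsymbol{\pi}^{i})_{i} = y^{i}$ and $\boldsymbol{\pi}^{i} = \boldsymbol{z}^{ij}$, $\boldsymbol{\pi}^{j} = \boldsymbol{z}^{ij}$ for each $(i,j)\in\mathcal{A}$, with $g_{2}\equiv 0$. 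Eliminating $\boldsymbol{x}_{2}$ in closed form from the augmented Lagrangian gives $\boldsymbol{z}^{ij} = (\boldsymbol{\pi}^{i}+\boldsymbol{\pi}^{j})/2$ and equal-and-opposite dual multipliers at the two endpoints of each edge; aggregating these per node reproduces exactly the primal update \eqref{iterate1}, the scalar dual update \eqref{iterate2} for the divergence constraint, and the vector dual update \eqref{iterate3} for the consensus constraints, in the spirit of the edge-variable consensus ADMM reductions in \cite{mateos2010distributed,banjac2019decentralized}.

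Once the identification is made, hypothesis (A$_{1}$) of Theorem \ref{TheoremADMM} is immediate since $g_{1}$ is a sum of linear, quadratic and box-indicator terms (convex, closed, proper) and $g_{2}\equiv 0$ on the affine set defining $\Omega_{2}$. Hypothesis (A$_{2}$) amounts to strong duality for the reformulated program, which follows from Remark \ref{rem: strong duality W_1^gamma} when $\gamma>0$ and from the LP strong duality recalled just after Assumption \ref{asmpt: feasible solution} when $\gamma=0$. Theorem \ref{TheoremADMM} then yields $g_{1}(\boldsymbol{x}_{1}^{k}) + g_{2}(\boldsymbol{x}_{2}^{k}) \to g^{*}$, which after rescaling by $\delta$ is exactly claim (i).

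For (ii) and (iii), Theorem \ref{TheoremADMM} also gives that the primal residual vanishes, which in our reformulation reads $\operatorname{div}(\boldsymbol{\pi}_{k}^{i})_{i} - \rho_{i} \to 0$ and $\boldsymbol{\pi}_{k}^{i} - \boldsymbol{\pi}_{k}^{j} \to 0$ for every $(i,j)\in\mathcal{A}$. Since $\boldsymbol{\pi}^{c}$ is finite, each $(\boldsymbol{\pi}_{k}^{i})_{k}$ stays in a compact box, so along a common subsequence $k_{n}$ the values $\boldsymbol{\pi}_{k_{n}}^{i}$ converge to some $\boldsymbol{\pi}_{*}^{i}$. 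Passing to the limit in the above residuals and using strong connectedness of $G$ forces $\boldsymbol{\pi}_{*}^{i} = \boldsymbol{\pi}_{*}$ for all $i$, with $\boldsymbol{\pi}_{*}$ feasible for \eqref{R.D.OT}; continuity of $f_{i}$ combined with part (i) shows that $\boldsymbol{\pi}_{*}$ attains $\mathcal{W}_{1}^{\gamma}$, hence is a minimizer. For $\gamma=0$ this already proves (ii). For $\gamma>0$ the regularized objective (restricted to the consensus-and-divergence feasible set) is strictly convex, so its minimizer is unique; combined with boundedness and the fact that every limit point is this unique minimizer, the whole sequence converges, proving (iii).

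The main obstacle is the bookkeeping in the first step: one has to match the specific coefficients of \eqref{iterate1}--\eqref{iterate3} — the $\delta^{-1}$ scaling, the factor $\tfrac{1}{2}$ on the consensus penalty, the average $(\boldsymbol{\pi}_{k}^{i}+\boldsymbol{\pi}_{k}^{j})/2$, and the summation over $N^{+}(i)\cup N^{-}(i)$ — with the ADMM updates on the two-block reformulation, and in particular verify that the per-edge dual multipliers aggregate to the single vector $\boldsymbol{s}_{k}^{i}$ at each node. Once this reduction is nailed down, the remainder of the argument is a routine application of the cited results together with a compactness/continuity step.
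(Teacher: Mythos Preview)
Your proposal is correct and follows essentially the same route as the paper: both introduce per-arc auxiliary variables $\boldsymbol{z}^{ij}$ with constraints $\boldsymbol{\pi}^{i}=\boldsymbol{z}^{ij}=\boldsymbol{\pi}^{j}$, eliminate $\boldsymbol{z}^{ij}=\tfrac{1}{2}(\boldsymbol{\pi}^{i}+\boldsymbol{\pi}^{j})$ and aggregate the edge multipliers into a node variable $\boldsymbol{s}^{i}$ to recover \eqref{iterate1}--\eqref{iterate3}, then verify (A$_1$)--(A$_2$) and invoke Theorem~\ref{TheoremADMM} together with compactness. The only notable deviation is cosmetic: for (ii) you deduce consensus directly from the vanishing primal residual $\boldsymbol{\pi}_{k}^{i}-\boldsymbol{\pi}_{k}^{j}\to 0$, whereas the paper passes to the limit in the dual update \eqref{iterate3} and uses the Laplacian kernel of the strongly connected graph; both arguments are equally short and yield the same conclusion.
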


	
	\begin{proof}
		Here are the three steps to prove the theorem. We first write problem~ %
		\eqref{R.D.OT} in the general form~\eqref{ADMM}, then we derive a closed
		form for the iterates~\eqref{dynamic1}--\eqref{dynamic3}. The final step is
		to prove that the limit points of iterates $\boldsymbol{\pi}^i_k$ are the
		same for any $i\in V$ when $k$ tends to infinity.
		
		\emph{Step 1.} We introduce the auxiliary variables $\left( \boldsymbol{z}
		^{ij}\right) _{\left( i,j\right) \in \mathcal{A}}\in \mathbb{R}^{|\mathcal{A}
			|^{2}}$ and rewrite $\mathcal{W}_{1}^{\gamma }\left( \boldsymbol{\rho ^{0}}, 
		\boldsymbol{\rho ^{\infty }}\right) $ as 
		\begin{equation}
			\underset{\boldsymbol{z}^{ij}\in \mathbb{R}^{|\mathcal{A}|},\;\forall
				(i,j)\in \mathcal{A}}{\underset{\boldsymbol{0}\leq \boldsymbol{\pi }^{i}\leq 
					\boldsymbol{\pi }^{c},\;\forall i\in V}{\min }}\left\{ 
			\begin{array}{c}
				\sum_{i\in V}\left( f_{i}(\boldsymbol{\pi }_{i})+\frac{\gamma }{2}\left\Vert 
				\boldsymbol{\pi }^{i}\right\Vert ^{2}\right) : \\ 
				\begin{array}{@{}l}
					\operatorname{div}(\boldsymbol{\pi }^{i})=\rho _{i},\;\forall i\in V, \\ 
					\boldsymbol{\pi }^{i}=\boldsymbol{z}^{ij},\boldsymbol{\pi }^{j}=\boldsymbol{%
						z }^{ij},\;\forall (i,j)\in \mathcal{A}%
				\end{array}%
			\end{array}
			\right\} .  \label{D.OT}
		\end{equation}
		The above problem can be written in the form~\eqref{ADMM} with 
		\begin{eqnarray}
			g_{1}\left( \left( \boldsymbol{\pi }^{i}\right) _{i\in V}\right)
			&=&\sum_{i\in V}\left( f_{i}(\boldsymbol{\pi }_{i})+\frac{\gamma }{2}
			\left\Vert \boldsymbol{\pi }^{i}\right\Vert ^{2}\right)  \label{g_i} \\
			&&+\sum_{i\in V}\mathcal{I}_{\left[ \boldsymbol{0}_{|\mathcal{A}|}, 
				\boldsymbol{\pi }^{c}\right] }\left( \boldsymbol{\pi }^{i}\right) ,  \notag
			\\
			g_{2}\left( \left( \boldsymbol{z}^{ij}\right) _{\left( i,j\right) \in 
				\mathcal{A}}\right) &=&0,  \notag
		\end{eqnarray}
		where $\mathcal{I}_{S}$ is the indicator function for the set $S$, defined
		by 
		\begin{equation*}
			\mathcal{I}_{S}\left( x\right) =\left\{ 
			\begin{array}{ccc}
				0, & \mathrm{if} & x\in S, \\ 
				&  &  \\ 
				+\infty & \mathrm{if} & x\notin S.%
			\end{array}
			\right.
		\end{equation*}
		The existence of the matrices $\boldsymbol{B}_{1}$ and $\boldsymbol{B}_{2}$
		is obvious since the constraints are linear. As the objective functions in \eqref{g_i} are convex and the constraints are linear, problem \eqref{R.D.OT}
		satisfies the conditions required for the application of Theorem \ref{TheoremADMM}. 
		
		\emph{Step 2.} Let us derive a closed form for the iterates \eqref{iterate1}
		--\eqref{iterate3} associated with problem \eqref{R.D.OT}. We define the
		augmented Lagrangian associated with problem~\eqref{D.OT} by 
		\begin{eqnarray*}
			&&\mathcal{L}_{\gamma ,\delta }\left( 
			\begin{array}{c}
				\left( \boldsymbol{\pi }^{i}\right) _{i\in V},\left( \boldsymbol{z}
				^{ij}\right) _{\left( i,j\right) \in \mathcal{A}},\left( \alpha ^{i}\right)
				_{i\in V}, \\ 
				\left( \boldsymbol{\beta }^{ij}\right) _{(i,j)\in \mathcal{A}},\left( 
				\boldsymbol{\theta }^{ij}\right) _{(i,j)\in \mathcal{A}}%
			\end{array}
			\right) \\
			&=&\sum_{i\in V}\left( f_{i}(\boldsymbol{\pi }^{i})+\frac{\gamma }{2}
			\left\Vert \boldsymbol{\pi }^{i}\right\Vert ^{2}+\mathcal{I}_{\left[ 
				\boldsymbol{0}_{|\mathcal{A}|},\boldsymbol{\pi }^{c}\right] }\left( 
			\boldsymbol{\pi }^{i}\right) \right) \\
			&&+\sum_{i\in V}\alpha ^{i}\left( \operatorname{div}\left( \boldsymbol{\pi }
			^{i}\right) _{i}-\rho _{i}\right) +\sum_{(i,j)\in \mathcal{A}}\left( 
			\boldsymbol{\beta }^{ij}\right) ^{T}\left( \boldsymbol{\pi }^{i}-\boldsymbol{%
				\ z}^{ij}\right) \\
			&&+\sum_{(i,j)\in \mathcal{A}}\left( \boldsymbol{\theta }^{ij}\right)
			^{T}\left( \boldsymbol{\pi }^{j}-\boldsymbol{z}^{ij}\right) +\frac{\delta }{%
				2 }\sum_{i\in V}\left\Vert \operatorname{div}\left( \boldsymbol{\pi }^{i}\right)
			_{i}-\rho _{i}\right\Vert ^{2} \\
			&&+\frac{\delta }{2}\sum_{(i,j)\in \mathcal{A}}\left\Vert \boldsymbol{\pi }
			^{i}-\boldsymbol{z}^{ij}\right\Vert ^{2}+\frac{\delta }{2}\sum_{(i,j)\in 
				\mathcal{A}}\left\Vert \boldsymbol{\pi }^{j}-\boldsymbol{z}^{ij}\right\Vert
			^{2} \\
			&=&\sum_{i\in V}\mathcal{J}_{\gamma ,\delta }^{i}\left( \boldsymbol{\pi }
			^{i},\left( \boldsymbol{z}^{ij}\right) ,\alpha ^{i},\left( \boldsymbol{\beta 
			}^{ij}\right) ,\left( \boldsymbol{\theta }^{ij}\right) \right) ,
		\end{eqnarray*}
		where $\left( \alpha ^{i}\right) _{i\in V}$, $\left( \boldsymbol{\beta }
		^{ij}\right) _{(ij)\in \mathcal{A}}$, $\left( \boldsymbol{\theta }
		^{ij}\right) _{(i,j)\in \mathcal{A}}$, are the Lagrange multipliers and $%
		\mathcal{J}_{\gamma ,\delta }^{i}$ is given for each $i\in V$ by 
		\begin{eqnarray*}
			&&\mathcal{J}_{\gamma ,\delta }^{i}\left( \boldsymbol{\pi }^{i},\left( 
			\boldsymbol{z}^{ij}\right) ,\alpha ^{i},\left( \boldsymbol{\beta }
			^{ij}\right) ,\left( \boldsymbol{\theta }^{ij}\right) \right) \\
			&=&f_{i}(\boldsymbol{\pi }^{i})+\frac{\gamma }{2}\left\Vert \boldsymbol{\pi }
			^{i}\right\Vert ^{2}+\mathcal{I}_{\left[ \boldsymbol{0}_{|\mathcal{A}|}, 
				\boldsymbol{\pi }^{c}\right] }\left( \boldsymbol{\pi }^{i}\right) +\\
			&&+\alpha
			^{i}\left( \operatorname{div}\left( \boldsymbol{\pi }^{i}\right) _{i}-\rho
			_{i}\right)+\sum_{j\in N^{+}(i)}\left( \boldsymbol{\beta }^{ij}\right) ^{T}\left( 
			\boldsymbol{\pi }^{i}-\boldsymbol{z}^{ij}\right) \\
			&&+\sum_{j\in N^{-}(i)}\left( 
			\boldsymbol{\theta }^{ji}\right) ^{T}\left( \boldsymbol{\pi }^{i}- 
			\boldsymbol{z}^{ji}\right) \\
			&&+\frac{\delta }{2}\left\Vert \operatorname{div}\left( \boldsymbol{\pi }^{i}\right)
			_{i}-\rho _{i}\right\Vert ^{2}+\frac{\delta }{2}\sum_{j\in
				N^{+}(i)}\left\Vert \boldsymbol{\pi }^{i}-\boldsymbol{z}^{ij}\right\Vert ^{2}
			\\
			&&+\frac{\delta }{2}\sum_{j\in N^{-}(i)}\left\Vert \boldsymbol{\pi }^{i}- 
			\boldsymbol{z}^{ji}\right\Vert ^{2}.
		\end{eqnarray*}
		Note that in the above formulation, $\mathcal{L}_{\gamma ,\delta }$
		decomposes with respect to each local variable $\boldsymbol{\pi }^{i}$. This
		decomposition makes the distribution of subproblems across different agents
		possible for parallel processing. The ADMM iterates \eqref{dynamic1}-- %
		\eqref{dynamic3} for problem (\ref{D.OT}) can be implemented by each agent $%
		i\in V$ as 
		\begin{subequations}
			\begin{eqnarray}
				\boldsymbol{\pi }_{k+1}^{i} &\in& \underset{\boldsymbol{0}_{|\mathcal{A}|}
					\leq \boldsymbol{\pi }^{i} \leq \boldsymbol{\pi }^{c}}{\arg \min }\ \mathcal{%
					J}_{\gamma ,\delta }^{i},  \label{it1} \\
				\left( \boldsymbol{z}^{ij}\right) _{k+1} &\in& \underset{\left( \boldsymbol{z%
					} ^{ij}\right) \in \mathbb{R}^{|\mathcal{A}|^{2}}}{\arg \min }\ \mathcal{L}
				_{\gamma ,\delta },  \label{it2} \\
				\alpha _{k+1}^{i} &=& \alpha _{k}^{i} + \delta \left( \operatorname{div}\left( 
				\boldsymbol{\pi }_{k+1}^{i}\right) _{i} - \rho _{i}\right),  \label{it3} \\
				\boldsymbol{\beta }_{k+1}^{ij} &=& \boldsymbol{\beta }_{k}^{ij} + \delta
				\left( \boldsymbol{\pi }_{k+1}^{i} - \boldsymbol{z}_{k+1}^{ij} \right),
				\label{it4} \\
				\boldsymbol{\theta }_{k+1}^{ij} &=& \boldsymbol{\theta }_{k}^{ij} + \ym{\delta}\left( 
				\boldsymbol{\pi }_{k+1}^{j} - \boldsymbol{z}_{k+1}^{ij} \right).  \label{it5}
			\end{eqnarray}
			
			More explicitly, iterate \eqref{it1} is given by:
			
		\end{subequations}
		\begin{eqnarray}
			&&\boldsymbol{\pi }_{k+1}^{i}  \notag \\
			&=&\underset{\boldsymbol{0}_{|\mathcal{A}|}\leq \boldsymbol{\pi }^{i}\leq 
				\boldsymbol{\pi }^{c}}{\arg \min }f_{i}(\boldsymbol{\pi }^{i})+\frac{\gamma 
			}{2}\left\Vert \boldsymbol{\pi }^{i}\right\Vert ^{2} \\
			&&+\alpha _{k}^{i}\operatorname{div}\left( \boldsymbol{\pi }^{i}\right) _{i}+\frac{
				\delta }{2}\left\Vert \operatorname{div}\left( \boldsymbol{\pi }^{i}\right)
			_{i}-\rho _{i}\right\Vert ^{2}  \notag \\
			&&+\left( \boldsymbol{\pi }^{i}\right) ^{T}\left( \sum_{j\in N^{+}(i)} 
			\boldsymbol{\beta }_{k}^{ij}+\sum_{j\in N^{-}(i)}\boldsymbol{\theta }
			_{k}^{ji}\right)  \label{PI} \\
			&&+\frac{\delta }{2}\sum_{j\in N^{+}(i)}\left\Vert \boldsymbol{\pi }^{i}- 
			\boldsymbol{z}_{k}^{ij}\right\Vert ^{2}+\frac{\delta }{2}\sum_{j\in
				N^{-}(i)}\left\Vert \boldsymbol{\pi }^{i}-\boldsymbol{z}_{k}^{ji}\right\Vert
			^{2}.  \notag
		\end{eqnarray}
		\ym{Note that the sequence $\boldsymbol{\pi}_k^i$, $i\in V$, exists due to the feasibility Assumption~\ref{asmpt: feasible solution}, and since the set $[\boldsymbol{0},\boldsymbol{\pi}^c]$ is compact, existence follows from the Bolzano–Weierstrass theorem.}
		
		A closed formula can be found for $\boldsymbol{z}_{k+1}^{ij}$. Indeed, since
		problem~\eqref{it2} is an unconstrained convex problem, we solve the
		equation $\nabla _{\left( \boldsymbol{z}^{ij}\right) }\mathcal{L}_{\gamma
			,\delta }\left( \left( \boldsymbol{\pi }_{k+1}^{i}\right) ,\left( 
		\boldsymbol{z}^{ij}\right) _{k+1},\left( \alpha ^{i}\right) _{k},\left( 
		\boldsymbol{\beta }_{k}^{ij}\right) ,\left( \boldsymbol{\theta }
		_{k}^{ij}\right) \right) =\boldsymbol{0}_{|\mathcal{A}|^{2}}$. In
		particular, each $\boldsymbol{z}_{k+1}^{ij}$ satisfies 
		\begin{align*}
			\boldsymbol{0}_{|\mathcal{A}|}& =\nabla _{\boldsymbol{z}^{ij}}\Bigg( %
			\sum_{j\in N^{+}(i)}\left( \boldsymbol{\beta }^{ij}\right) ^{T}\left( 
			\boldsymbol{\pi }^{i}-\boldsymbol{z}^{ij}\right) \\
			& +\sum_{j\in N^{+}(i)}\left( \boldsymbol{\theta }^{ij}\right) ^{T}\left( 
			\boldsymbol{\pi }^{j}-\boldsymbol{z}^{ij}\right) \\
			& \quad +\frac{\delta }{2}\sum_{j\in N^{+}(i)}\left\Vert \boldsymbol{\pi }
			^{i}-\boldsymbol{z}^{ij}\right\Vert ^{2} \\
			& +\frac{\delta }{2}\sum_{j\in N^{+}(i)}\left\Vert \boldsymbol{\pi }^{j}- 
			\boldsymbol{z}^{ij}\right\Vert ^{2}\Bigg).
		\end{align*}
		which gives 
		\begin{equation}
			\boldsymbol{z}_{k+1}^{ij}=\frac{1}{2\delta }\left( \boldsymbol{\beta }
			_{k}^{ij}+\boldsymbol{\theta }_{k}^{ij}\right) +\frac{1}{2}\left( 
			\boldsymbol{\pi }_{k+1}^{i}+\boldsymbol{\pi }_{k+1}^{j}\right) .  \label{Z}
		\end{equation}
		Inserting \eqref{Z} in the dual updates formulas \eqref{it4} and \eqref{it5}
		yields 
		\begin{eqnarray}
			\boldsymbol{\beta }_{k+1}^{ij} &=&\frac{1}{2}\left( \boldsymbol{\beta }
			_{k}^{ij}-\boldsymbol{\theta }_{k}^{ij}\right) +\frac{\delta }{2}\left( 
			\boldsymbol{\pi }_{k+1}^{i}-\boldsymbol{\pi }_{k+1}^{j}\right) , \\
			\boldsymbol{\theta }_{k+1}^{ij} &=&-\frac{1}{2}\left( \boldsymbol{\beta }
			_{k}^{ij}-\boldsymbol{\theta }_{k}^{ij}\right) -\frac{\delta }{2}\left( 
			\boldsymbol{\pi }_{k+1}^{i}-\boldsymbol{\pi }_{k+1}^{j}\right) .
		\end{eqnarray}
		By summing up the two last equations we get $\boldsymbol{\beta }_{k+1}^{ij}+ 
		\boldsymbol{\theta }_{k+1}^{ij}=\boldsymbol{0}_{|\mathcal{A}|}$, for all $%
		k\geq 1$, which implies that 
		\begin{equation*}
			\boldsymbol{z}_{k+1}^{ij}=\frac{\boldsymbol{\pi }_{k+1}^{i}+\boldsymbol{\pi }
				_{k+1}^{j}}{2}.
		\end{equation*}
		The dual updates \eqref{it4} and \eqref{it5} become 
		\begin{eqnarray}
			\boldsymbol{\beta }_{k+1}^{ij} &=&\boldsymbol{\beta }_{k}^{ij}+\frac{\delta 
			}{2}\left( \boldsymbol{\pi }_{k+1}^{i}-\boldsymbol{\pi }_{k+1}^{j}\right) ,
			\label{beta} \\
			\boldsymbol{\theta }_{k+1}^{ij} &=&\boldsymbol{\theta }_{k}^{ij}-\frac{
				\delta }{2}\left( \boldsymbol{\pi }_{k+1}^{i}-\boldsymbol{\pi }
			_{k+1}^{j}\right) .  \label{teta}
		\end{eqnarray}
		By taking the sum of \eqref{beta} and \eqref{teta} over $N^{+}(i)$ and $%
		N^{-}(i)$ respectively for each $i\in V$ then summing up we get 
		\begin{eqnarray*}
			&&\sum_{j\in N^{+}(i)}\boldsymbol{\beta }_{k+1}^{ij}+\sum_{j\in N^{-}(i)} 
			\boldsymbol{\theta }_{k+1}^{ji} \\
			&=&\sum_{j\in N^{+}(i)}\boldsymbol{\beta }_{k}^{ij}+\sum_{j\in N^{-}(i)} 
			\boldsymbol{\theta }_{k}^{ji} \\
			&&+\frac{\delta }{2}\sum_{j\in N^{+}(i)\cup N^{-}(i)}\left( \boldsymbol{\pi }
			_{k+1}^{i}-\boldsymbol{\pi }_{k+1}^{j}\right) .
		\end{eqnarray*}
		Defining the new multiplier $\left( \boldsymbol{s}^{i}\right) _{i\in V}$
		such that 
		\begin{equation}
			\boldsymbol{s}^{i}=\sum_{j\in N^{+}(i)}\boldsymbol{\beta }^{ij}+\sum_{j\in
				N^{-}(i)}\boldsymbol{\theta }^{ji},\;i\in V,  \label{s}
		\end{equation}
		we get 
		\begin{equation}
			\boldsymbol{s}_{k+1}^{i}=\boldsymbol{s}_{k}^{i}+\frac{\delta }{2}\sum_{j\in
				N^{+}(i)\cup N^{-}(i)}\left( \boldsymbol{\pi }_{k+1}^{i}-\boldsymbol{\pi }
			_{k+1}^{j}\right) .  \label{ss}
		\end{equation}
		So, by rescaling the dual variable $\boldsymbol{s}^{i}$ to $\frac{1}{\delta }
		\boldsymbol{s}^{i}$ and ${\alpha }^{i}$ to $\frac{1}{\delta }{\alpha }^{i}$,
		and replacing \eqref{s} in \eqref{PI} we get \eqref{iterate1}.
		
		\emph{Step 3.} Assumption $\mathbf{A}_1$ of Theorem \ref{TheoremADMM} holds
		since the cost is convex quadratic, and Assumption $\mathbf{A}_2$ holds
		because it is equivalent to strong duality and the existence of optimal
		primal and dual solutions, which is satisfied for our problem by Remark \ref%
		{rem: strong duality W_1^gamma}. Hence by Theorem \ref{TheoremADMM} the
		sequence of costs converges to the optimal value $\mathcal{W}_{1}^{\gamma
		}\left( \boldsymbol{\rho^{0}}, \boldsymbol{\rho^{\infty}}\right)$ \ym{which proves the item (i) of Theorem \ref{Theorem}}.
		
	\ym{We now proceed to prove item~(ii). Assume that $\gamma = 0$. 
	We aim to show that all limit points of the sequence 
	$\left( \boldsymbol{\pi}_{k+1}^{i} \right)_{k \geq 1}$ 
	are identical for every agent $i \in V$.}
	
		Let $\boldsymbol{\pi}^{i}_\ast$ be a limit point of the sequence $\left( 
		\boldsymbol{\pi}_{k}^{i} \right)_{k \geq 0}$. By Theorem \ref{TheoremADMM},
		the dual sequence $\left( \boldsymbol{s}_k^{i} \right)_{k \geq 1} $
		converges to the optimal dual point $\boldsymbol{s}_{\ast}^{i} $ for all $i
		\in V $. Taking the limit of both sides of \eqref{ss} as $k_l \to +\infty $,
		we obtain the equation: 
		\begin{equation*}
			\left( |N^{+}(i)| + |N^{-}(i)| \right) \boldsymbol{\pi}_{\ast}^{i} - \sum_{j
				\in N^{+}(i) \cup N^{-}(i)} \boldsymbol{\pi}_{\ast}^{j} = \boldsymbol{0}_{| 
				\mathcal{A}|}, \quad \forall i \in V.
		\end{equation*}
		This can be expressed compactly as $H\left(\left(\boldsymbol{\pi}
		_{\ast}^{i}\right)_{i\in V}\right) = 0 $, where $H = L \otimes I_{|\mathcal{%
				A }|} $. Since $\left( \boldsymbol{\pi}_{\ast}^{i} \right)_{i \in V} $
		belongs to $\ker(H) $ and the graph $G $ is strongly connected, we conclude
		from \cite[Theorem 8.36]{fuhrmann2015mathematics} that $\boldsymbol{\pi}
		_{\ast}^{i} = \boldsymbol{\pi}_{\ast} $ for all $i \in V $. By repeating
		this process for all subsequences of $\left( \boldsymbol{\pi}_{k+1}^{i}
		\right)_{k \geq 1} $, $i\in V$, \ym{which proves item (ii) of Theorem \ref{Theorem}}. 
		
		\ym{Now, we prove the item (iii)}. For $\gamma > 0 $, the convergence of the sequence $\left( 
		\boldsymbol{\pi}_{k+1}^{i} \right)_{k \geq 1} $ to a unique $\boldsymbol{\pi}
		^i $ for all $i \in V $ is guaranteed, as the local objective functions are
		strictly convex and satisfy the $\gamma $-gradient Lipschitz condition \cite[
		Theorem 1]{shi2014linear}. 
	\end{proof}
	
	\begin{remark}
		In the case where $\gamma > 0 $, the capacity vector $\boldsymbol{\pi}^c $
		may be infinite, as the compactness argument is not required in the proof.
		However, for $\gamma = 0 $, the finiteness of $\boldsymbol{\pi}^c $ must be
		maintained, since the sequence $\left( \boldsymbol{\pi}_{k+1}^{i} \right)_{k
			\geq 1} $, $i \in V $, can take infinite values. For an example, see \cite[
		Page 260]{bertsekas2015parallel}.
	\end{remark}
	\ym{
		\begin{remark}
			All the results established in this paper remain valid for objective functions of the form~\eqref{consensus}, provided that each $f_i$, $i \in V$, is convex.
		\end{remark}
	}

	As a consequence of applying ADMM, the convergence rate of Algorithm~ %
	\eqref{iterate1}--\eqref{iterate3} to the optimal solution is of the order $%
	O(1/k)$. When $\gamma>0$, problem~\eqref{R.D.OT} becomes strictly convex,
	thereby enhancing the algorithm's convergence speed to the order $O(1/k^2)$
	and ensuring the solution's uniqueness. Experimental observations regarding
	the impact of $\gamma$ on convergence in various scenarios are discussed in
	Section~\ref{numerical}.
	
	
	\section{Numerical simulations}
	
	\label{numerical} 
	
	In this section, we evaluate the efficiency and limitations of the
	Algorithm~ \eqref{iterate1}--\eqref{iterate3}. First, we execute the
	algorithm on several types of graphs and for different values of $\gamma$,
	to investigate the impact of the quadratic regularization term on the number
	of iterations. Subsequently, we illustrate through a simple example that the
	method might promote sparse solutions in the presence of quadratic
	regularization. \ym{Third, we demonstrate on a basic graph example that the algorithm shows resilience to perturbations and can effectively adapt to changes in the graph, highlighting the role of $\gamma$ in this adaptation.}
	
	In these simulations, we set $\varepsilon = 10^{-4}$ as the stopping
	criterion according to \eqref{error}. To isolate the impact of the quadratic regularization, we
	fix $\delta = 10$; however, any other value would also be suitable. We
	utilize a cost vector $\boldsymbol{c}$ with all components being equal. The
	components of the capacity vector $\boldsymbol{\pi}^c$ are chosen to be
	sufficiently large to ensure that the problems considered are feasible.
	
	
	\subsection{Influence of the quadratic regularization}
	
	\label{influence gamma} 
	
	We show the impact of $\gamma$ on the number of iterations. To this end, we
	run Algorithm~\eqref{iterate1}--\eqref{iterate3} on different types of
	graphs called complete, star, ring, and line graphs \cite[Section~2.5]%
	{lewis2009network}, with a number of agents $|V|=20$. For each graph, we
	conduct three simulations using different values of the regularization
	parameter $\gamma$ which are $0$, $0.1$ and $1$. As shown in Figure~\ref%
	{fig:iterations}, fewer iterations are needed for the star graph and the
	complete graph to achieve the required error tolerance. This is attributed
	to the fast propagation of information in these graphs, thanks to their high
	connectivity, especially compared to the line and ring graphs. However, more
	computations and communications are needed for each agent in the complete
	graph, since the number of neighbors is higher. 
	\begin{figure*}[!htb]
		\centering
		\begin{minipage}[b]{0.22\textwidth}  
			\centering
			\includegraphics[width=\linewidth]{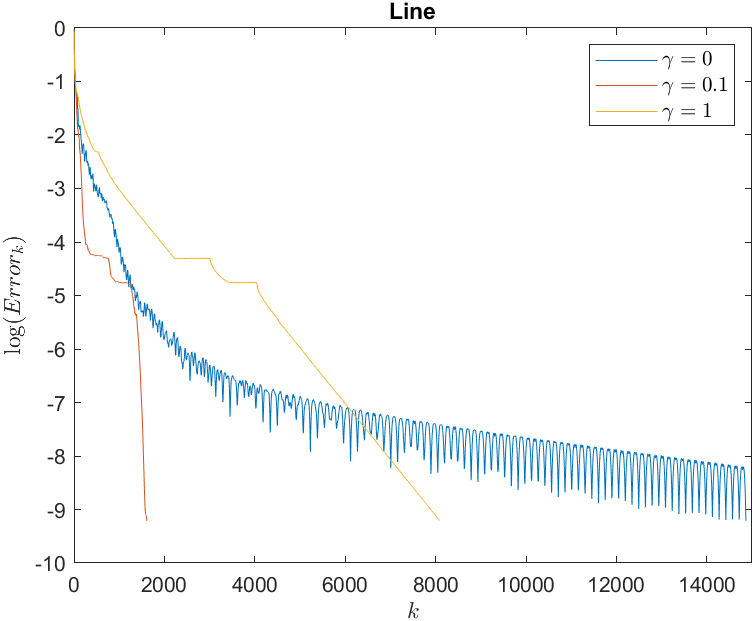}
			\captionof{figure}{Line Graph}
			\label{fig:LineGraph}
		\end{minipage}
		\hspace{0.5em} 
		\begin{minipage}[b]{0.22\textwidth}  
			\centering
			\includegraphics[width=\linewidth]{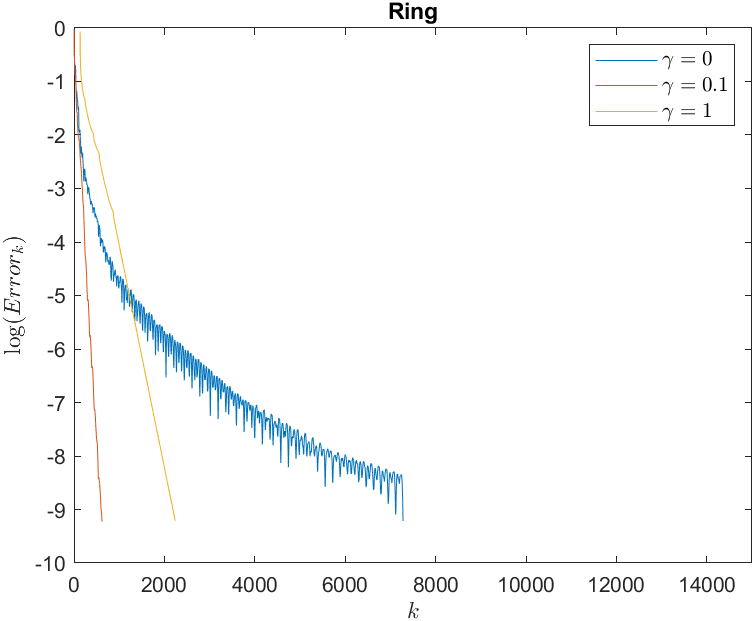}
			\captionof{figure}{Ring Graph}
			\label{fig:RingGraph}
		\end{minipage}
		\hspace{0.5em} 
		\begin{minipage}[b]{0.22\textwidth}  
			\centering
			\includegraphics[width=\linewidth]{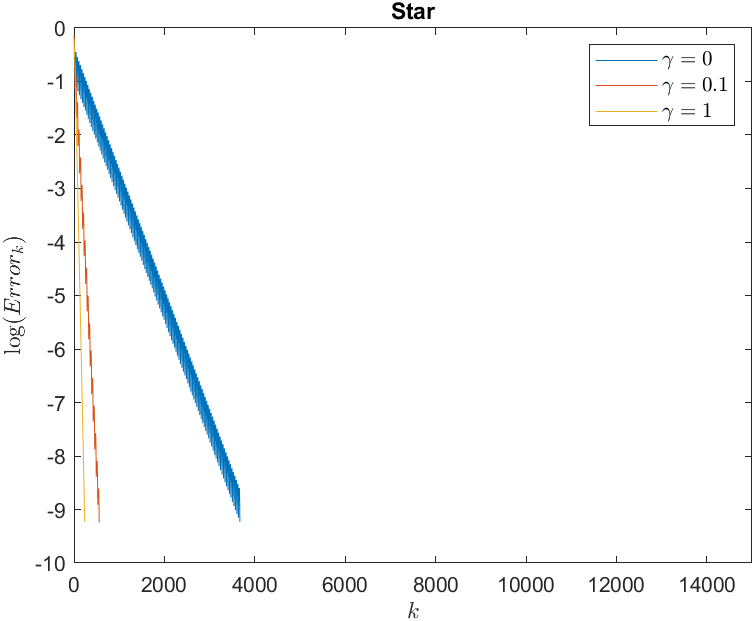}
			\captionof{figure}{Star Graph}
			\label{fig:StarGraph}
		\end{minipage}
		\hspace{0.5em} 
		\begin{minipage}[b]{0.22\textwidth}  
			\centering
			\includegraphics[width=\linewidth]{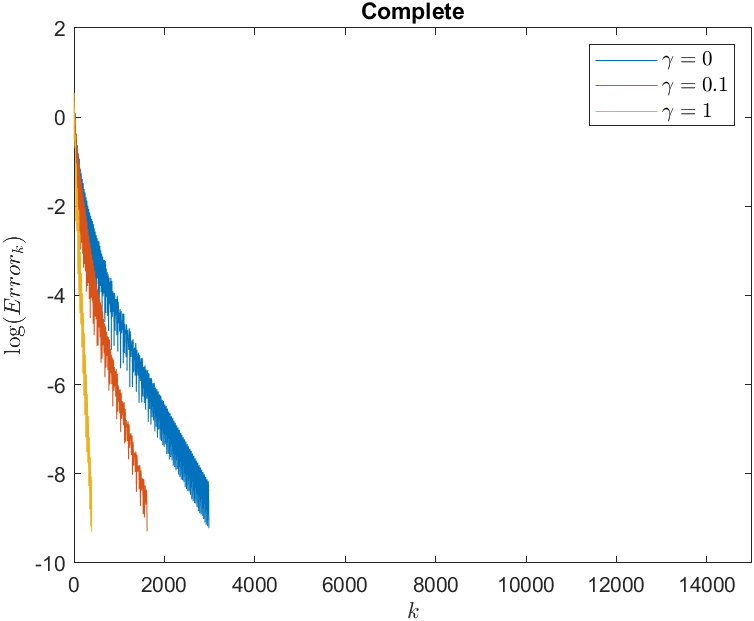}
			\captionof{figure}{Complete Graph}
			\label{fig:CompleteGraph}
		\end{minipage}
		\caption{The evolution of $\log(\mathsf{Error}_k)$ defined in \eqref{error},
			as a function of the iteration number for different graph types.}
		\label{fig:iterations}
	\end{figure*}
	
	The parameter $\gamma$ has a significant impact on the convergence speed of
	the algorithm as well. However, this impact appears to also depend on the
	connectivity of the graph. In particular, a high value of $\gamma$ leads to
	faster convergence for the star and complete graphs. On the other hand, $%
	\gamma = 0.1$ gives faster convergence than $\gamma = 1$ for the line and
	ring graphs. This indicates that greater convexity alone does not always
	imply faster convergence.
	
		\subsection{Comparison of algorithms}
		Here, we compare four iterative algorithms for solving the optimal transport OT problem on bipartite graphs. Specifically, we compare:
		
		\begin{itemize}
			\item The quadratically regularized \textbf{distributed ADMM} algorithm, introduced through the iterative scheme given by \eqref{iterate1}--\eqref{iterate3}.
			
			\item The quadratically regularized \textbf{centralized ADMM} algorithm, derived below as the centralized counterpart to the distributed formulation.
			
			\item The entropically regularized \textbf{Sinkhorn algorithm}, as developed in \cite{cuturi2013sinkhorn, Peyre-cuturi}.
			
			\item The quadratically regularized \textbf{nonlinear Gauss--Seidel} method, introduced in \cite[Algorithm 1]{lorenz2021quadratically}.
		\end{itemize}
		
		These algorithms are evaluated on bipartite graphs with $|V|=10$ nodes (5
		sources and 5 sinks). The initial distribution $\boldsymbol{\rho }^{0}$ and the final one $\boldsymbol{\rho }^{\infty}$ are generated randomly using the matlab function \textsf{rand}. Since the graph is bipirtite, we set $\boldsymbol{\rho }=(\boldsymbol{\rho }^{0},-\boldsymbol{\rho }%
		^{\infty })^{T}$.
		
		Now, we derive the centralized version of algorithm \eqref{iterate1}-\eqref{iterate3}. We use the
		global augmented Lagrangian: 
		\begin{eqnarray*}
			&&\mathcal{L}_{\gamma ,\delta }(\boldsymbol{\pi },\boldsymbol{z},\boldsymbol{%
				\alpha },\boldsymbol{\beta }) \\
			&=&\sum_{(i,j)\in \mathcal{A}}\pi _{ij}c_{ij}+\frac{\gamma }{2}\Vert 
			\boldsymbol{\pi }\Vert ^{2}+\boldsymbol{\alpha }^{\top }(\boldsymbol{A\pi }-%
			\boldsymbol{\rho }) \\
			&&+\frac{\delta }{2}\Vert \boldsymbol{A\pi }-\boldsymbol{\rho }\Vert ^{2}+%
			\boldsymbol{\beta }^{\top }(\boldsymbol{\pi }-\boldsymbol{z})+\frac{\delta }{%
				2}\Vert \boldsymbol{\pi }-\boldsymbol{z}\Vert ^{2}.
		\end{eqnarray*}
		
		The centralized ADMM iterations are: 
		\begin{align*}
			\boldsymbol{\pi }_{k+1}& =\left[ (\gamma \delta ^{-1}+1)(\boldsymbol{I}+%
			\boldsymbol{A}^{\top }\boldsymbol{A})\right] ^{-1}\times  \\
			& \left( \boldsymbol{A}^{\top }\boldsymbol{\rho }+\boldsymbol{z}_{k}-\delta
			^{-1}\boldsymbol{c}-\boldsymbol{A}^{\top }\boldsymbol{\alpha }_{k}-%
			\boldsymbol{\beta }_{k}\right) , \\
			\boldsymbol{z}_{k+1}& =[\boldsymbol{\pi }_{k+1}+\boldsymbol{\beta }%
			_{k}]_{[\boldsymbol{0},\boldsymbol{\pi} ^{c}]}, \\
			\boldsymbol{\alpha }_{k+1}& =\boldsymbol{\alpha }_{k}+\boldsymbol{A\pi }%
			_{k+1}-\boldsymbol{\rho }, \\
			\boldsymbol{\beta }_{k+1}& =\boldsymbol{\beta }_{k}+\boldsymbol{\pi }_{k+1}-%
			\boldsymbol{z}_{k+1},
		\end{align*}%
		where $[\,\cdot \,]_{[0,\boldsymbol{\pi }^{c}]}$ denotes componentwise
		projection onto $[\boldsymbol{0},\boldsymbol{\pi }^{c}]$ and $\boldsymbol{A}$ is matrix associated with the divergence operator.
				\begin{figure*}[!htb]
			\centering
			\begin{minipage}[b]{0.22\textwidth}
				\centering
				\includegraphics[width=\linewidth]{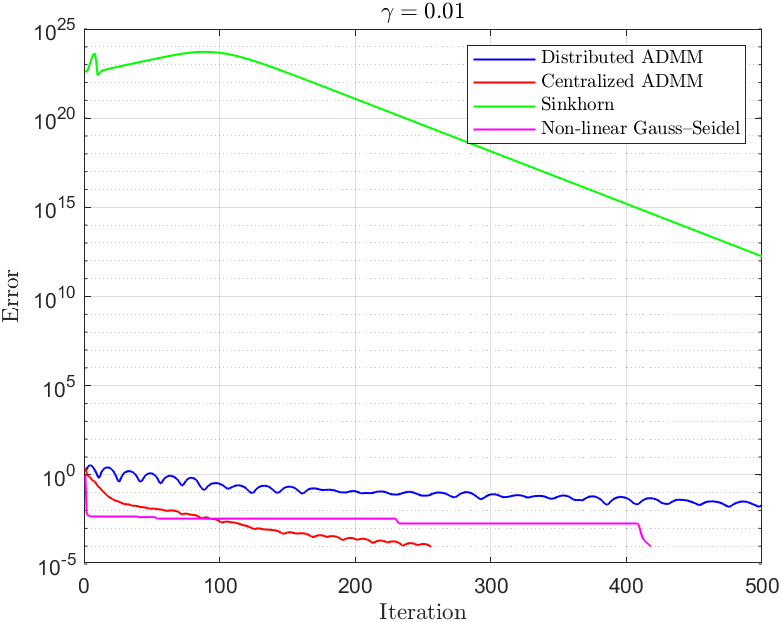}
				\captionof{figure}{$\gamma = 0.01$}
				\label{fig:compare_r0.01}
			\end{minipage}
			\hspace{0.5em}
			\begin{minipage}[b]{0.22\textwidth}
				\centering
				\includegraphics[width=\linewidth]{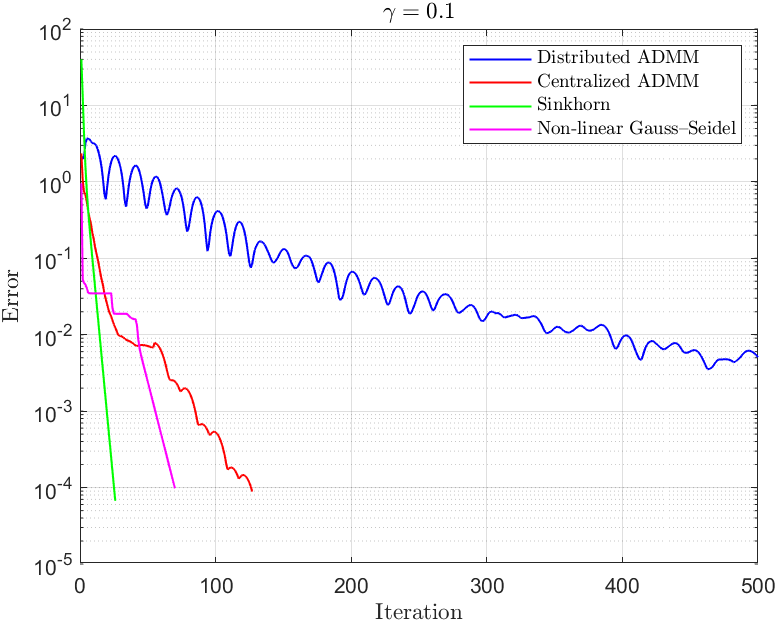}
				\captionof{figure}{$\gamma = 0.1$}
				\label{fig:compare_r0.1}
			\end{minipage}
			\hspace{0.5em}
			\begin{minipage}[b]{0.22\textwidth}
				\centering
				\includegraphics[width=\linewidth]{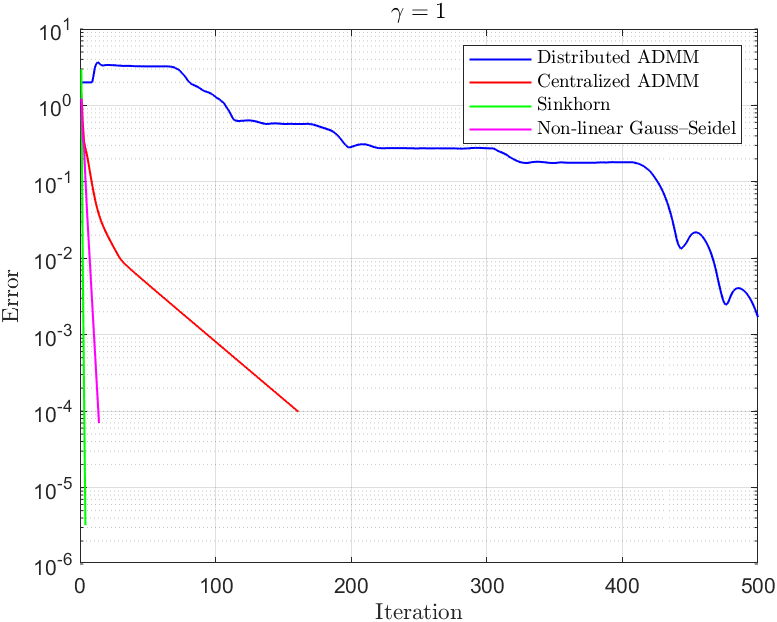}
				\captionof{figure}{$\gamma = 1$}
				\label{fig:compare_r1}
			\end{minipage}
			\hspace{0.5em}
			\begin{minipage}[b]{0.22\textwidth}
				\centering
				\includegraphics[width=\linewidth]{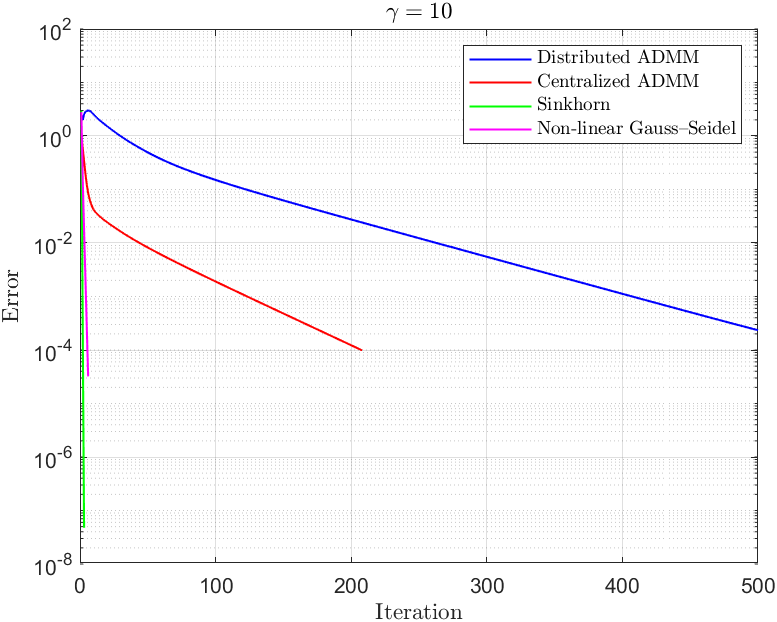}
				\captionof{figure}{$\gamma = 10$}
				\label{fig:compare_r10}
			\end{minipage}
			\caption{Comparison of convergence rates across regularization levels $\gamma$ for Distributed ADMM, Centralized ADMM, Sinkhorn, and Non-linear Gauss-Seidel algorithms.}
			\label{fig:compare_convergence}
		\end{figure*}
		
		\begin{figure*}[t]
			\centering
			\subfloat[$\gamma = 0.01$]{\includegraphics[width=0.22\textwidth]{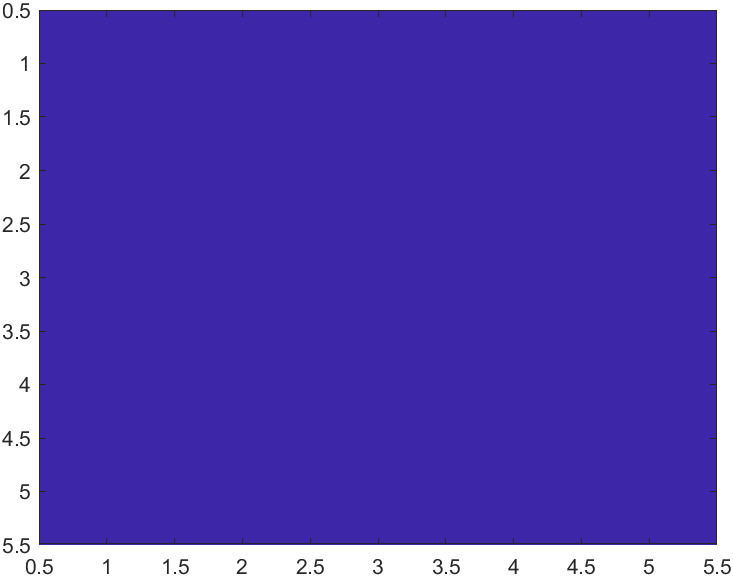}}
			\hfill
			\subfloat[$\gamma = 0.1$]{\includegraphics[width=0.22\textwidth]{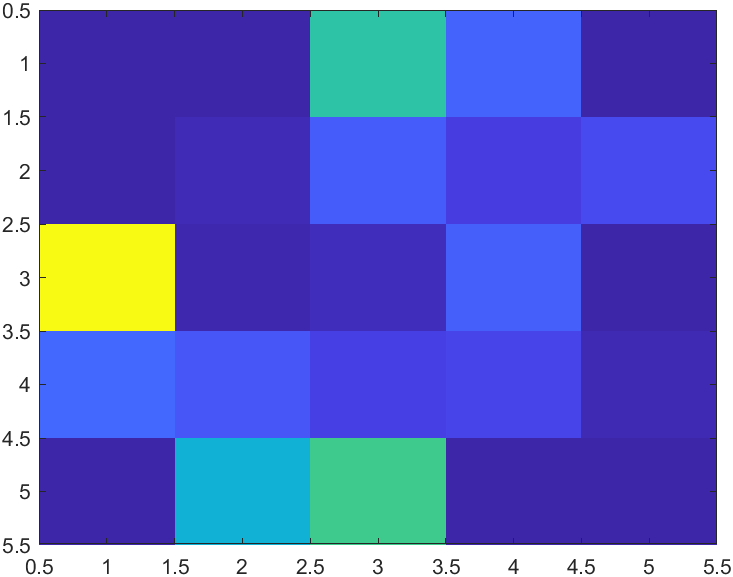}}
			\hfill
			\subfloat[$\gamma = 1$]{\includegraphics[width=0.22\textwidth]{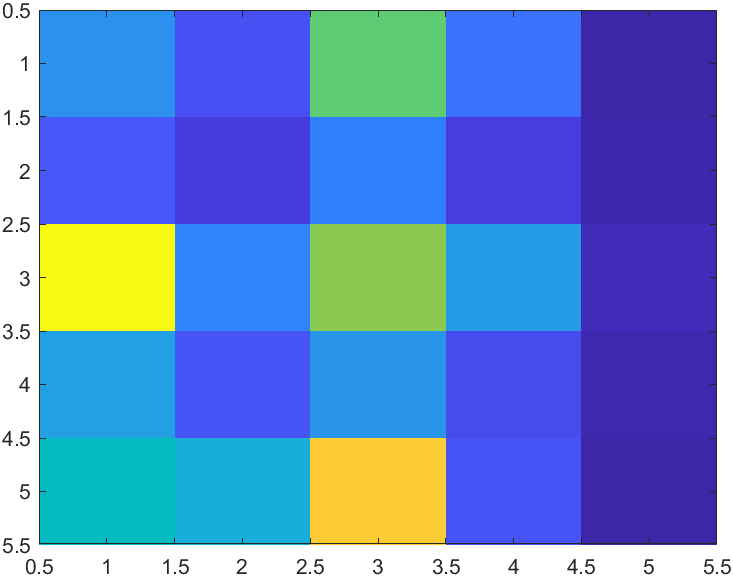}}
			\hfill
			\subfloat[$\gamma = 10$]{\includegraphics[width=0.22\textwidth]{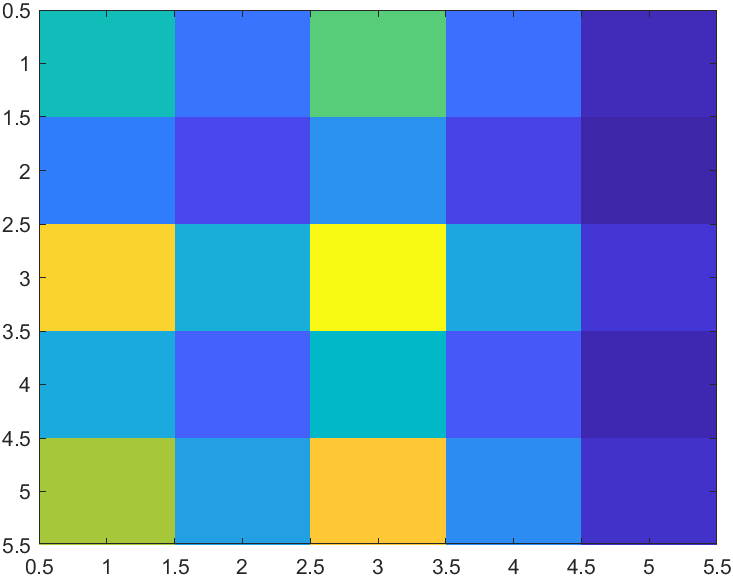}}
			\\
			\subfloat[$\gamma = 0.01$]{\includegraphics[width=0.22\textwidth]{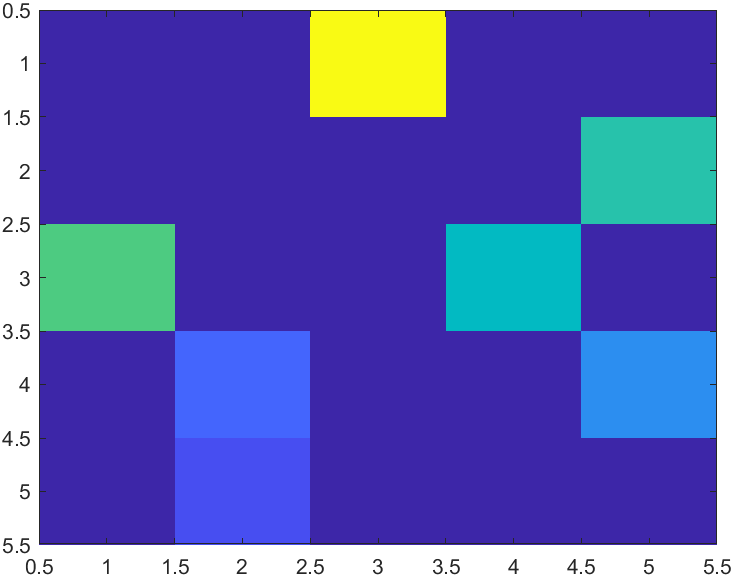}}
			\hfill
			\subfloat[$\gamma = 0.1$]{\includegraphics[width=0.22\textwidth]{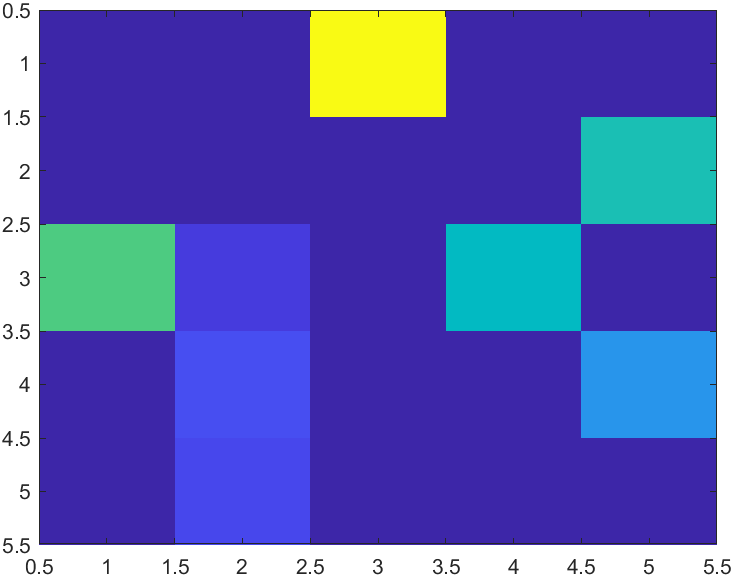}}
			\hfill
			\subfloat[$\gamma = 1$]{\includegraphics[width=0.22\textwidth]{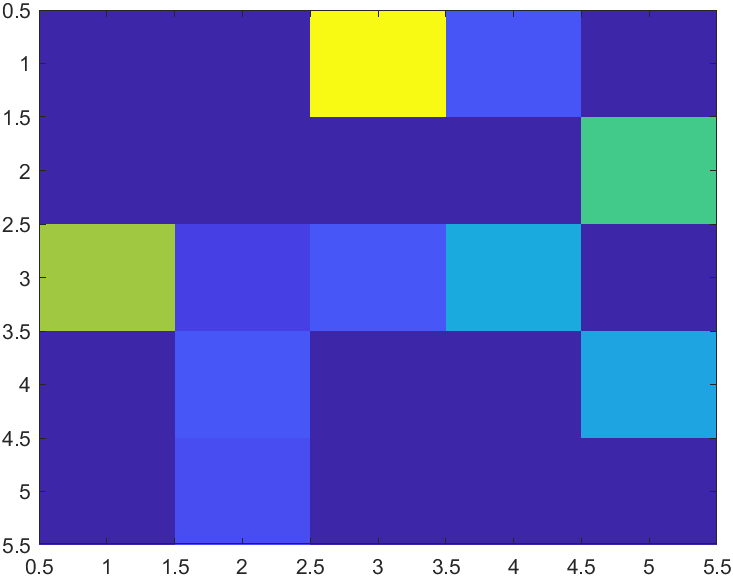}}
			\hfill
			\subfloat[$\gamma = 10$]{\includegraphics[width=0.22\textwidth]{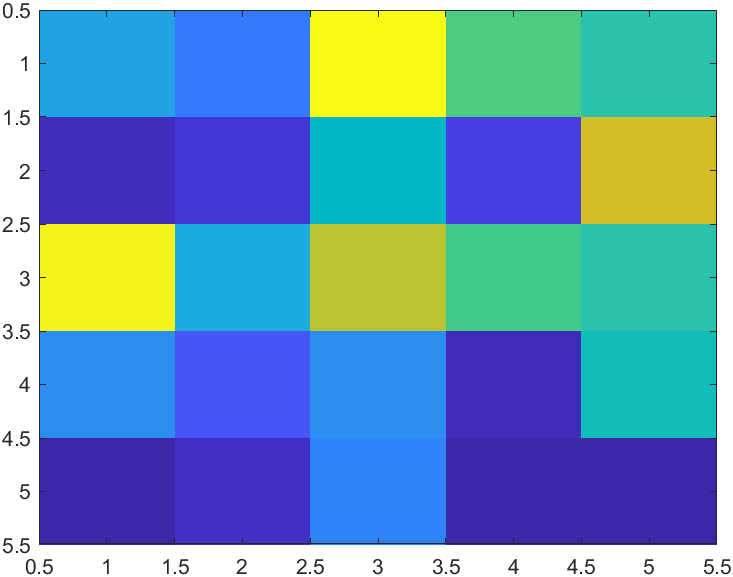}}
			\caption{Comparison of transport plans across regularization levels $\gamma$: Top row shows Sinkhorn solutions; bottom row shows sparse quadratic optimal transport solutions.}
			\label{fig:sinkhorn_vs_quad}
		\end{figure*}
		
		We compare the performance of four algorithms: Distributed ADMM, Centralized ADMM, Sinkhorn, and Non-linear Gauss-Seidel for quadratic regularization—in terms of convergence behavior and sparsity across varying regularization levels $\gamma$.
		
		As shown in Figures~\ref{fig:compare_convergence}, for small regularization ($\gamma = 0.01$), the Sinkhorn algorithm exhibits noticeable numerical instability, whereas Centralized ADMM and Non-linear Gauss-Seidel achieve faster convergence. Distributed ADMM, while stable, converges significantly more slowly. As $\gamma$ increases ($\gamma = 0.1$ and $\gamma = 1$), all methods improve in performance, with Sinkhorn and Gauss-Seidel converging the fastest and Centralized ADMM striking a good balance between speed and robustness. At high regularization ($\gamma = 10$), Sinkhorn and Gauss-Seidel reach low error in just a few iterations, while Centralized ADMM remains competitive and Distributed ADMM continues its steady, albeit slower, convergence.
		Regarding sparsity, Figure~\ref{fig:sinkhorn_vs_quad} shows that both Sinkhorn and quadratically regularized methods produce sparse transport plans at small $\gamma=0.01$, primarily due to the dominance of the cost term. As $\gamma$ increases, the Sinkhorn solutions transition rapidly to dense, smooth transport patterns due to the entropic regularization, while the quadratic methods maintain sparsity up to moderate $\gamma$. However, at high $\gamma= 10$, even the quadratic solutions become less sparse, displaying more distributed transport weights. These findings are in agreement with the results of~\cite{essid2018quadratically,lorenz2021quadratically}, which demonstrate how quadratic regularization induces sparsity in optimal transport solutions.

	\subsection{Robustness}
	
	\label{Robustness}
	
	Let us now demonstrate how Algorithm~\eqref{iterate1}--\eqref{iterate3}
	dynamically adapts to topological changes in the graph or modifications in
	the supply-demand conditions throughout its execution. Consider the initial
	configuration in Figure~\ref{fig:graph1}, where $\boldsymbol{\rho}
	=\left(2,-3,-2,1,1,1\right).$ Suppose agent~6 exits the network at
	iteration 100, as depicted in Figure~\ref{fig:graph2}.
	
	\begin{figure}[!htb]
		\centering
		\begin{minipage}[b]{0.35\textwidth}
			\centering
			\includegraphics[width=\linewidth]{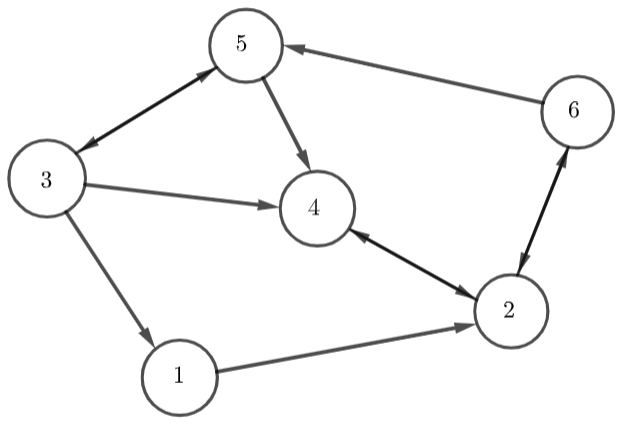}
			\captionof{figure}{The graph before the departure of agent~6. All the agents participate in the optimization process.}
			\label{fig:graph1}
		\end{minipage}
		\hspace{1em} 
		\begin{minipage}[b]{0.35\textwidth}
			\centering
			\includegraphics[width=\linewidth]{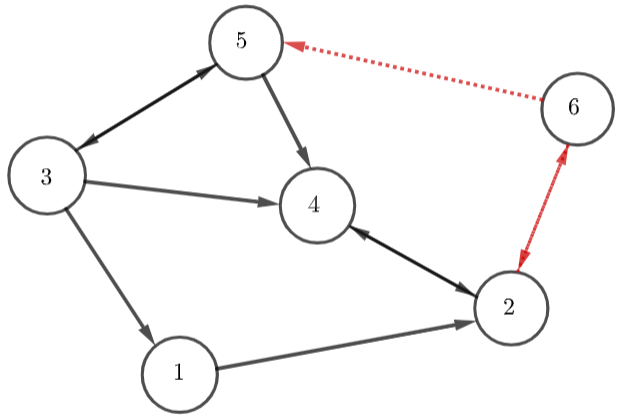}
			\captionof{figure}{The graph after the departure of agent~6. The red dashed arcs and the node number $6$ are inactive.}
			\label{fig:graph2}
		\end{minipage}
		\caption{Topology of the graph.}
		\label{fig:topology}
	\end{figure}
	
	To accommodate this change, we stop updating \eqref{iterate1}-- %
	\eqref{iterate3} for agent~6 for all iterations $k > 100$ and remove the
	arcs connected to this agent from the graph. Furthermore, the other agents
	previously linked to agent~6, that is agents 1, 2, and 5, continue to
	operate using the last information received from it prior to iteration 100.
	Furthermore, we adjust the supply-demand vector to $\boldsymbol{\rho}
	=\left(2,-3,-1,1,1,0\right)$ to reflect the departure of agent~6 from the
	system. \ym{The algorithm then proceeds with the remaining five agents. As illustrated in Figure~\ref{fig:errors}, the convergence behavior differs depending on the value of the regularization parameter~$\gamma$. When $\gamma = 0$, the system fails to adapt to the change: the norms of the agents and the errors diverge. In contrast, for $\gamma > 0$, the system effectively responds to the disruption, requiring fewer iterations to regain stability. Notably, when $\gamma = 1$, an even better convergence behavior is observed, with the algorithm reaching stability in fewer iterations. These results highlight the critical role of the regularization parameter~$\gamma$ in ensuring robustness and adaptability in real-time dynamic systems.
	}.

	\begin{figure*}[!htb]
		\centering
		\begin{minipage}[b]{0.3\textwidth}  
			\centering
			\includegraphics[width=\linewidth]{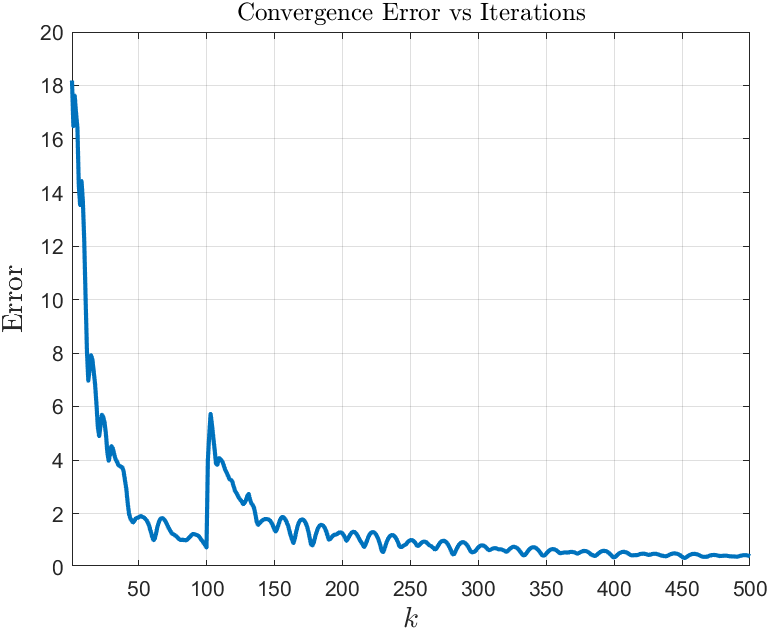}
		\end{minipage}
		\hspace{1em} 
		\begin{minipage}[b]{0.3\textwidth}  
			\centering
			\includegraphics[width=\linewidth]{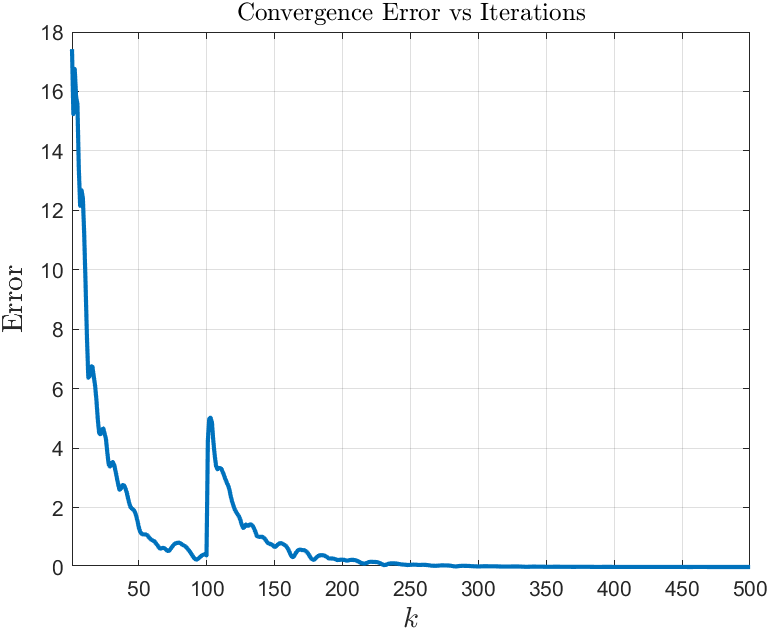}
		\end{minipage}
		\hspace{1em} 
		\begin{minipage}[b]{0.3\textwidth}  
			\centering
			\includegraphics[width=\linewidth]{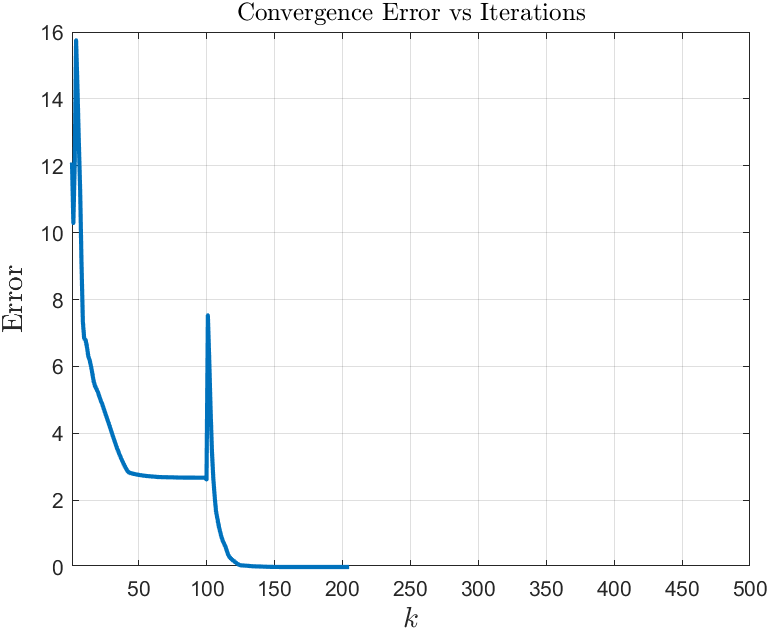}
		\end{minipage}
		\par
		\vspace{1em}
		\par
		\begin{minipage}[b]{0.3\textwidth}  
			\centering
			\includegraphics[width=\linewidth]{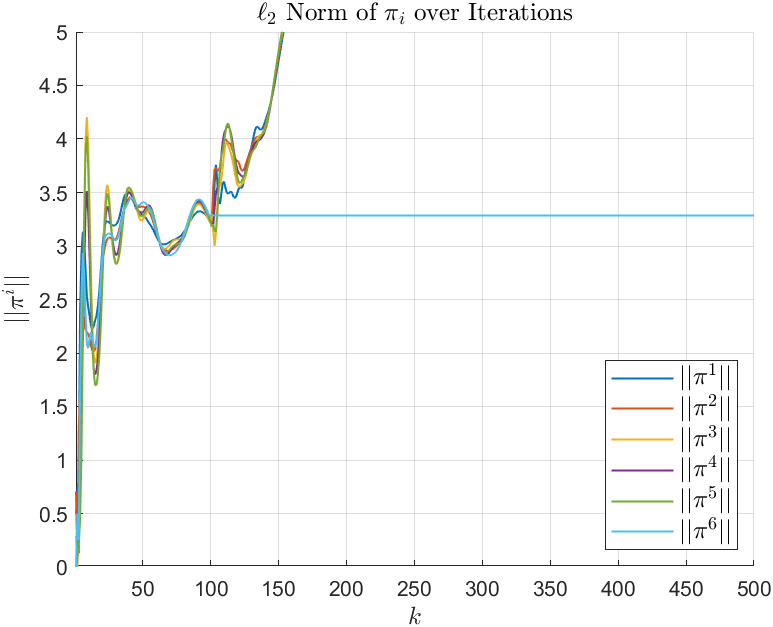}
		\end{minipage}
		\hspace{1em} 
		\begin{minipage}[b]{0.3\textwidth}  
			\centering
			\includegraphics[width=\linewidth]{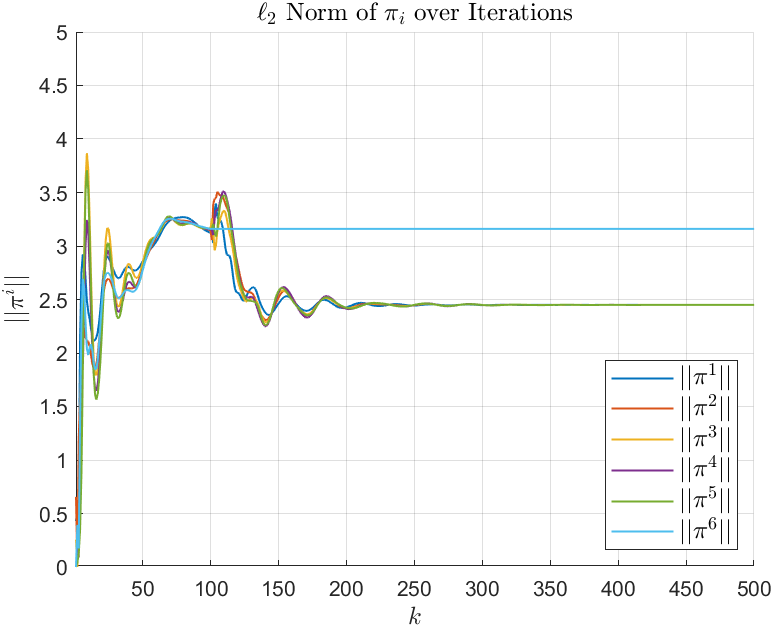}
		\end{minipage}
		\hspace{1em} 
		\begin{minipage}[b]{0.3\textwidth}  
			\centering
			\includegraphics[width=\linewidth]{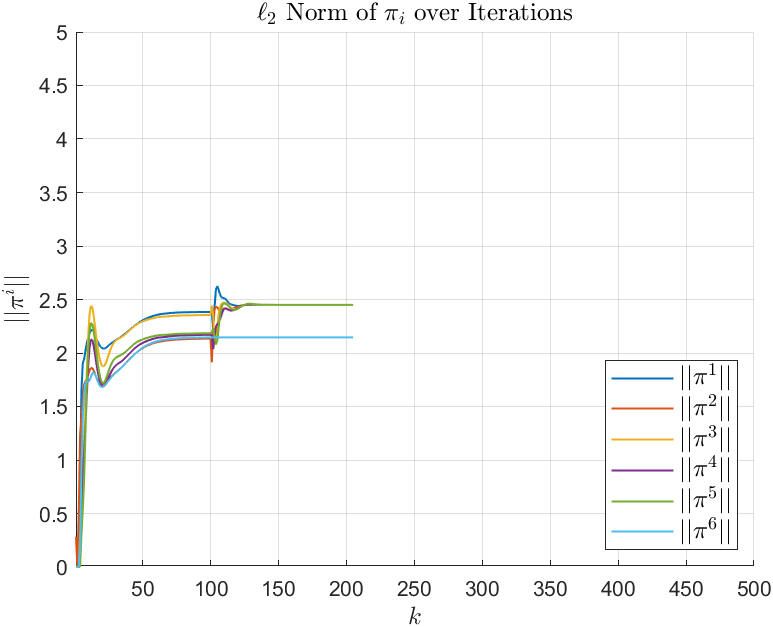}
		\end{minipage}
		\caption{In the first and the second row, the evolution of $\mathsf{Error}_k$ defined in \eqref{error} and agents' norms $\|\boldsymbol{\protect%
				\pi}^i\|$, $i \in V$, respectively as a function of the iteration number after the departure of agent~6 at iteration $100$.}
		\label{fig:errors}
	\end{figure*}
	The corresponding transport plans, with and without the departure of agent~6
	during the execution of the algorithm, are provided in Table \ref{tab:gamma_flows_without_6} and Table \ref{tab:gamma_flows_with_6_excluded}.
	\begin{table}[tph!]
		\centering
		\begin{tabular}{|c|c|c|c|}
			\hline
			\textbf{Arc (From $\rightarrow$ To)} & $\gamma = 0$ & $\gamma = 0.1$ & $\gamma = 1$ \\
			\hline
			1 $\rightarrow$ 2 & 2.0000 & 2.0000 & 2.0000 \\
			4 $\rightarrow$ 3 & 0.9998 & 1.0000 & 1.0000 \\
			5 $\rightarrow$ 4 & 1.9990 & 2.0000 & 2.0000 \\
			6 $\rightarrow$ 2 & 1.0000 & 1.0000 & 1.0000 \\
			\hline
		\end{tabular}
		\caption{The transport plan for $\gamma = 0$, $0.1$, and $1$ without excluding agent 6.}
		\label{tab:gamma_flows_without_6}
	\end{table}
	\begin{table}[tph!]
		\centering
		\begin{tabular}{|c|c|c|c|}
			\hline
			\textbf{Arc (From $\rightarrow$ To)} & $\gamma = 0$ & $\gamma = 0.1$ & $\gamma = 1$ \\
			\hline
			1 $\rightarrow$ 2 & -- & 2.0000 & 2.0000 \\
			3 $\rightarrow$ 4 & -- & 1.0000 & 1.0000 \\
			4 $\rightarrow$ 5 & -- & 1.0000 & 1.0000 \\
			\hline
		\end{tabular}
		\caption{The transport plan for $\gamma = 0.1$ and $1$ with excluding agent 6. The case $\gamma = 0$ diverges.}
		\label{tab:gamma_flows_with_6_excluded}
	\end{table}
	
	
	\section{Conclusion and perspectives}
	
	
	This study introduces an ADMM-based distributed algorithm tailored for
	solving the quadratically regularized optimal transport problem on directed
	and strongly connected graphs. The algorithm employs distributed
	agent-to-agent communication among neighbors and its convergence is proved.
	Through extensive numerical simulations, we show that quadratic
	regularization significantly influences both the sparsity of the transport
	plan and the robustness of the algorithm, particularly in the presence of
	topological changes in the graph during its execution. The algorithm we
	propose operates synchronously, meaning that all agents update
	simultaneously at each iteration. In future research, we aim to extend our
	results to the asynchronous case. Another important question is to
	characterize the set of limit points for the iteration in \eqref{iterate1}.
	While numerical simulations suggest that this set contains a unique element
	when $\gamma = 0$, a formal proof is still required to either confirm or
	refute this claim. 
	
	\ym{In future work, the authors plan to investigate the distributed form of the entropically regularized optimal transport problem on general graphs. This research direction demands careful consideration and the development of novel analytical and computational methods.}
	
	\bibliographystyle{IEEEtran}
	\bibliography{biblio.bib}
	
\end{document}